\newtheorem{theorem}{Theorem}[section]
\newtheorem{lemma}[theorem]{Lemma}
\newtheorem{remark}[theorem]{Remark}
\newtheorem{assumption}[theorem]{Assumption}
\numberwithin{equation}{section}
\newcommand{\st}{\textnormal{s.t.}}
\newcommand{\argmin}{\mathop{\rm argmin}}
\newcommand{\LCal}{\mathcal{L}}
\def\blue#1{\textcolor{black}{#1}}
\newcommand{\br}{\mathbb{R}}
\newcommand{\be}{\begin{equation}}
\newcommand{\ee}{\end{equation}}
\newcommand{\ba}{\begin{array}}
\newcommand{\ea}{\end{array}}
\newcommand{\bpm}{\begin{pmatrix}}
\newcommand{\epm}{\end{pmatrix}}
\newcommand{\XCal}{\mathcal{X}}
\newcommand{\etal}{{et al. }}
\begin{document}

\title{On the Sublinear Convergence Rate of Multi-Block ADMM}

\author{Tianyi Lin\thanks{Department of Systems Engineering and Engineering Management, The Chinese University of Hong Kong, Shatin, New Territories, Hong Kong, China. Email: linty@se.cuhk.edu.hk; sqma@se.cuhk.edu.hk. Research of S. Ma was supported in part by the Hong Kong Research Grants Council General Research Fund Early Career Scheme (Project ID: CUHK 439513).}
\and Shiqian Ma\footnotemark[1]
\and Shuzhong Zhang\thanks{Department of Industrial and Systems Engineering, University of Minnesota, Minneapolis, MN 55455, USA. Email: zhangs@umn.edu. Research of S. Zhang was supported in part by the NSF Grant CMMI-1161242.}}

\date{June 29, 2015}

\maketitle

\begin{abstract}

The alternating direction method of multipliers (ADMM) is widely used in solving structured convex optimization problems. Despite of its success in practice, the convergence of the standard ADMM for minimizing the sum of $N$ $(N\geq 3)$ convex functions whose variables are linked by linear constraints, has remained unclear for a very long time. Recently, Chen \etal \cite{Chen-admm-failure-2013} provided a counter-example showing that the ADMM for $N\geq 3$ may fail to converge without further conditions. Since the ADMM for $N\geq 3$ has been very successful when applied to many problems arising from real practice, it is worth further investigating under what kind of sufficient conditions it can be guaranteed to converge. In this paper, we present such sufficient conditions that can guarantee the sublinear convergence rate for the ADMM for $N\geq 3$. Specifically, we show that if one of the functions is convex (not necessarily strongly convex) and the other $N-1$ functions are strongly convex, and the penalty parameter lies in a certain region, the ADMM converges with rate $O(1/t)$ in a certain ergodic sense, and $o(1/t)$ in a certain non-ergodic sense, where $t$ denotes the number of iterations. \blue{As a by-product, we also provide a simple proof for the $O(1/t)$ convergence rate of two-block ADMM in terms of both objective error and constraint violation, without assuming any condition on the penalty parameter and strong convexity on the functions.}

\vspace{0.8cm}

\noindent {Keywords: Alternating Direction Method of Multipliers, Sublinear Convergence Rate, Convex Optimization}


\noindent {Mathematics Subject Classification 2010: 90C25, 90C30 }

\end{abstract}


\section{Introduction}

We consider solving the following multi-block convex minimization problem:
\be\label{prob:N}\ba{ll} \min & f_1(x_1) + f_2(x_2) + \cdots + f_N(x_N) \\
                         \st  & A_1 x_1  + A_2 x_2 + \cdots + A_N x_N = b \\
                              & x_i \in \mathcal{X}_i, \, i = 1,\ldots, N, \ea \ee
where $A_i \in \br^{p\times n_i}$, $b\in\br^p$, $\XCal_i\subset \br^{n_i}$ are closed convex sets, and $f_i:\br^{n_i}\rightarrow\br^p$ are closed convex functions. One recently popular way to solve \eqref{prob:N}, when the functions $f_i$'s are of special structures, is to apply the alternating direction method of multipliers (ADMM) \cite{Glowinski-Marrocco-1975,Gabay-Mercier-1976}. The ADMM is closely related to the Douglas-Rachford \cite{Douglas-Rachford-56} and Peaceman-Rachford \cite{Peaceman-Rachford-55} operator splitting methods that date back to 1950s. These operator splitting methods were further studied later in \cite{Lions-Mercier-79,Fortin-Glowinski-1983,Glowinski-LeTallec-89,Eckstein-thesis-89}.
The ADMM has been revisited recently due to its success in solving problems with special structures arising from compressed sensing, machine learning, image processing, and so on; see the recent survey papers \cite{Boyd-etal-ADM-survey-2011,Eckstein-tutorial-admm} for more information.

ADMM for solving \eqref{prob:N} is based on an augmented Lagrangian method framework. The augmented Lagrangian function for \eqref{prob:N} is defined as
\[ \LCal_\gamma(x_1,\ldots,x_N;\lambda) := \sum_{j=1}^N f_j(x_j) - \left\langle \lambda, \sum_{j=1}^N A_j x_j -b\right\rangle + \frac{\gamma}{2}\left\|\sum_{j=1}^N A_j x_j -b\right\|^2,\]
where $\lambda$ is the Lagrange multiplier and $\gamma > 0$ is a penalty parameter.
In a typical iteration of the standard ADMM for solving \eqref{prob:N}, the following updating procedure is implemented:
\be\label{admm-N}
\left\{\ba{lcl} x_1^{k+1} & := & \argmin_{x_1\in \XCal_1} \ \LCal_\gamma(x_1,x_2^k,\ldots,x_N^k;\lambda^k) \\
                x_2^{k+1} & := & \argmin_{x_2\in \XCal_2} \ \LCal_\gamma(x_1^{k+1},x_2,x_3^k,\ldots,x_N^k;\lambda^k) \\
                          & \vdots & \\
                x_N^{k+1} & := & \argmin_{x_N\in \XCal_N} \ \LCal_\gamma(x_1^{k+1},x_2^{k+1},\ldots,x_{N-1}^{k+1},x_N;\lambda^k) \\
                \lambda^{k+1} & := & \lambda^k - \gamma \left(\sum_{j=1}^N A_j x_j^{k+1} -b\right).           \ea\right. \ee

The ADMM \eqref{admm-N} for solving two-block convex minimization problems (i.e., $N=2$) has been studied extensively in the literature.
The global convergence of ADMM \eqref{admm-N} when $N=2$ has been shown in \cite{Gabay-83,Eckstein-Bertsekas-1992}. There are also some very recent works that study the convergence rate properties of ADMM when $N=2$ (see, e.g., \cite{He-Yuan-rate-ADM-2012,Monteiro-Svaiter-2010a,Deng-Yin-2012,Boley-2012,He-Yuan-nonergodic-2012,Davis-Yin-2014-Regularity-admm}).

However, the convergence of ADMM \eqref{admm-N} when $N\geq 3$ had remained unclear for a long time.
In a recent work by Chen \etal \cite{Chen-admm-failure-2013}, a counter-example was constructed that shows the failure of ADMM \eqref{admm-N} when $N\geq 3$. Since the ADMM \eqref{admm-N} for $N\geq 3$ has been successfully applied to solve many problems arising from real practice (see e.g., \cite{Tao-Yuan-SPCP-2011,Wright-RASL-TPAMI}), it is worth investigating under what kind of sufficient conditions the ADMM \eqref{admm-N} can converge. Moreover, it has been observed by many researchers that the ADMM \eqref{admm-N} often outperforms all its modified versions (see the observations in \cite{Wang-etal-2013-multi-2,sun-toh-yang-admm-2014}). In fact, Sun, Toh and Yang made the following statement in \cite{sun-toh-yang-admm-2014}:
    ``However, to the best of our knowledge, up to now the dilemma is that at least
    for convex conic programming, the modified versions though with convergence guarantee, often
    perform 2-3 times slower than the multi-block ADMM with no convergent guarantee.''
There is thus a strong need to further study sufficient conditions that can guarantee the convergence of \eqref{admm-N}.
It was shown by Han and Yuan in \cite{Han-Yuan-note-2012} that ADMM \eqref{admm-N} globally converges if all the functions $f_1,\ldots,f_N$ are assumed to be strongly convex and the penalty parameter $\gamma$ is smaller than a certain bound.  Chen, Shen and You \cite{chen-shen-you-admm3} showed that
the 3-block ADMM (i.e., $N=3$ in \eqref{admm-N}) globally converges if $A_1$ is injective, $f_2$ and $f_3$ and strongly convex and $\gamma$ is smaller than a certain bound. \blue{After we released our work\footnote{Preprint available at http://arxiv.org/abs/1408.4265}, Cai, Han and Yuan \cite{cai-han-yuan-2014} and Li, Sun and Toh \cite{Li-Sun-Toh-2014-admm} independently proved that when $N=3$, the ADMM \eqref{admm-N} converges under the conditions that one function among $f_1$, $f_2$ and $f_3$ is strongly convex and $\gamma$ is smaller than a certain bound. Davis and Yin \cite{Davis-Yin-2015} studied a variant of the 3-block ADMM (see Algorithm 8 in \cite{Davis-Yin-2015}) which requires that $f_1$ is strongly convex and $\gamma$ is smaller than a certain bound to guarantee the convergence.  
Recently, Lin, Ma and Zhang \cite{Lin-Ma-Zhang-2015} proposed several alternative approaches to ensure the sublinear convergence rate of \eqref{admm-N} without requiring any function to be strongly convex. Furthermore, Lin, Ma and Zhang \cite{Lin-Ma-Zhang-2015-free-gamma} proved that the 3-block ADMM is globally convergent for any $\gamma>0$ when it is applied to solve the so-called regularized least squares decomposition problems.}
In a recent work by Hong and Luo \cite{Luo-ADMM-2012}, a variant of ADMM \eqref{admm-N} with small step size in updating the Lagrange multiplier was studied. Specifically, \cite{Luo-ADMM-2012} proposed to replace the last equation in \eqref{admm-N} by
\[\lambda^{k+1} := \lambda^k - \alpha \gamma \left(\sum_{j=1}^N A_j x_j^{k+1} -b\right),\]
where $\alpha>0$ is a small step size. Linear convergence of this variant is proved under the assumption that the objective function satisfies certain error bound conditions. However, it is noted that the selection of $\alpha$ is in fact bounded by some parameters associated with the error bound conditions to guarantee the convergence. Therefore, it might be difficult to choose $\alpha$ in practice.
There are also studies on the convergence rate of some other variants of ADMM \eqref{admm-N}, and we refer the interested readers to  \cite{He-Tao-Yuan-2012,He-Tao-Yuan-MOR-2013,He-Hou-Yuan-Jacob-2013,Deng-admm-2014,Hong-etal-2014-BSUMM} for details of these variants. In this paper, we focus on the {\it ADMM \eqref{admm-N} that directly extends the two-block ADMM to problems with more than two block variables}.

{\bf Our contributions.} The main contribution in this paper are as follows. We show that the ADMM \eqref{admm-N} when $N\geq 3$ converges with rate $O(1/t)$ in ergodic sense and $o(1/t)$ in non-ergodic sense, under the assumption that $f_2,\ldots,f_N$ are strongly convex and $f_1$ is convex but not necessarily strongly convex, and $\gamma$ \blue{is smaller than a certain bound}. It should be pointed out that our assumption is weaker than the one used in \cite{Han-Yuan-note-2012}, in which all the functions are required to be strongly convex. Moreover, unlike the sufficient condition suggested in \cite{Chen-admm-failure-2013}, we do not make any assumption on the matrices $A_1,\ldots,A_N$. To the best of our knowledge, the convergence rate results given in this paper are the first sublinear convergence rate results for the standard ADMM \eqref{admm-N} when $N\geq 3$. We also remark here that by further assuming additional conditions, we proved the global linear convergence rate of ADMM \eqref{admm-N} in \cite{Lin-Ma-Zhang-2014-linear}.

{\bf Organization.} The rest of this paper is organized as follows. In Section \ref{sec:pre} we provide some preliminaries for our convergence rate analysis. In Section \ref{sec:ergodic}, we prove the convergence rate of ADMM \eqref{admm-N} in the ergodic sense. In Section \ref{sec:non-ergodic}, we prove the convergence rate of ADMM \eqref{admm-N} in the non-ergodic sense. Section \ref{sec:conclusion} draws some conclusions and points out some future directions.

\section{Preliminaries}\label{sec:pre}

We will only prove the convergence results of ADMM for $N=3$, because all the analysis can be extended to arbitrary $N$ easily. As a result, for the ease of presentation and succinctness, we assume $N=3$ in the rest of this paper. We will present the results for general $N$ but omit the proofs.

We restate the problem \eqref{prob:N} for $N=3$ as
\be\label{prob:P}\ba{ll} \min & f_1(x_1) + f_2(x_2) + f_3(x_3) \\
                         \st  & A_1 x_1 + A_2 x_2 + A_3 x_3 = b \\
                              & x_1\in\XCal_1, x_2\in\XCal_2, x_3\in\XCal_3. \ea \ee
The ADMM for solving \eqref{prob:P} can be summarized as (note that some constant terms in the three subproblems are discarded):
\begin{eqnarray}
x_{1}^{k+1} & := & \argmin_{x_{1} \in \XCal_{1}}f_{1}(x_{1})+\frac{\gamma}{2}\|A_{1}x_{1}+A_{2}x_{2}^{k}+A_{3}x_{3}^{k}-b-\frac{1}{\gamma}\lambda^{k}\|^{2} \label{eq::update_x}\\
x_{2}^{k+1} & := & \argmin_{x_{2} \in \XCal_{2}}f_{2}(x_{2})+\frac{\gamma}{2}\|A_{1}x_{1}^{k+1}+A_{2}x_{2}+A_{3}x_{3}^{k}-b-\frac{1}{\gamma}\lambda^{k}\|^{2} \label{eq::update_y} \\
x_{3}^{k+1} & := & \argmin_{x_{3} \in \XCal_{3}}f_{3}(x_{3})+\frac{\gamma}{2}\|A_{1}x_{1}^{k+1}+A_{2}x_{2}^{k+1}+A_{3}x_{3}-b-\frac{1}{\gamma}\lambda^{k}\|^{2}\label{eq::update_z} \\
\lambda^{k+1} &:=& \lambda^{k} - \gamma \left(A_{1}x_{1}^{k+1}+A_{2}x_{2}^{k+1}+A_{3}x_{3}^{k+1}-b\right). \label{eq::update_lambda}
\end{eqnarray}
The first-order optimality conditions for \eqref{eq::update_x}-\eqref{eq::update_z} are given respectively by $x_{i}^{k+1}\in\XCal_{i}, i=1,2,3$, and
\begin{align}
& (x_{1}-x_{1}^{k+1})^{\top}\left[ 
g_1(x_{1}^{k+1})-A_{1}^{\top}\lambda^{k}+\gamma A_{1}^{\top}(A_{1}x_{1}^{k+1}+A_{2}x_{2}^{k}+A_{3}x_{3}^{k}-b)\right]\geq 0, & \quad \forall x_{1}\in\XCal_{1}, \label{opt-x1} \\
& (x_{2}-x_{2}^{k+1})^{\top}\left[ 
g_2(x_{2}^{k+1})-A_{2}^{\top}\lambda^{k}+\gamma A_{2}^{\top}(A_{1}x_{1}^{k+1}+A_{2}x_{2}^{k+1}+A_{3}x_{3}^{k}-b)\right] \geq 0, &  \quad \forall x_{2}\in\XCal_{2}, \label{opt-x2} \\
& (x_{3}-x_{3}^{k+1})^{\top}\left[ 
g_3(x_{3}^{k+1})-A_{3}^{\top}\lambda^{k}+\gamma A_{3}^{\top}(A_{1}x_{1}^{k+1}+A_{2}x_{2}^{k+1}+A_{3}x_{3}^{k+1}-b)\right] \geq 0, &  \quad \forall x_{3}\in\XCal_{3}, \label{opt-x3}
\end{align}
where $g_i \in \partial f_i$ is the subgradient of $f_i$ for $i=1,2,3$.
Moreover, by combining with \eqref{eq::update_lambda}, \eqref{opt-x1}-\eqref{opt-x3} can be rewritten as
\begin{align}
& (x_{1}-x_{1}^{k+1})^{\top}\left[ g_{1}(x_{1}^{k+1})-A_{1}^{\top}\lambda^{k+1} 
+\gamma A_{1}^{\top}A_{2}(x_{2}^{k}-x_{2}^{k+1}) + \gamma A_{1}^{\top}A_{3}(x_{3}^{k}-x_{3}^{k+1})\right] \geq 0, & \forall x_{1}\in\XCal_{1}, \label{opt-x1-lambda} \\
& (x_{2}-x_{2}^{k+1})^{\top}\left[ g_2(x_{2}^{k+1})-A_{2}^{\top}\lambda^{k+1}+\gamma A_{2}^{\top}A_{3}(x_{3}^{k}-x_{3}^{k+1})\right] \geq 0, & \forall x_{2}\in\XCal_{2}, \label{opt-x2-lambda} \\
& (x_{3}-x_{3}^{k+1})^{\top}\left[ g_{3}(x_{3}^{k+1})-A_{3}^{\top}\lambda^{k+1}\right] \geq 0, & \forall x_{3}\in\XCal_{3}. \label{opt-x3-lambda}
\end{align}

We denote $\Omega = \XCal_1\times \XCal_2 \times \XCal_3 \times \br^p$ and the optimal set of \eqref{prob:P} as $\Omega^*$, and the following assumption is made throughout this paper.
\begin{assumption} The optimal set $\Omega^*$ for problem \eqref{prob:P} is non-empty.\end{assumption}
According to the first-order optimality conditions for \eqref{prob:P}, solving \eqref{prob:P} is equivalent to finding $$(x_1^*,x_2^*,x_3^*,\lambda^*)\in\Omega^*$$
such that the following holds:
\begin{equation}\label{kkt}
\left\{
\begin{array}{l}
(x_1-x_1^*)^\top (g_1(x_1^*)-A_1^\top\lambda^*) \geq 0, \forall x_1\in\XCal_1, \\
(x_2-x_2^*)^\top (g_2(x_2^*)-A_2^\top\lambda^*) \geq 0, \forall x_2\in\XCal_2, \\
(x_3-x_3^*)^\top (g_3(x_3^*)-A_3^\top\lambda^*) \geq 0, \forall x_3\in\XCal_3, \\
A_{1}x_{1}^{*}+A_{2}x_{2}^{*}+A_{3}x_{3}^{*}-b=0,
\end{array}
\right.
\end{equation}
where $g_i(x_i^*)\in\partial f_i(x_i^*)$, $i=1,2,3$.

Furthermore, the following condition is assumed in our subsequent analysis.
\begin{assumption}\label{assump:strongly-convex}
The functions $f_2$ and $f_3$ are strongly convex with parameters $\sigma_2>0$ and $\sigma_3>0$, respectively; i.e., the following two inequalities hold:
\begin{eqnarray}
f_2(y) &\geq& f_2(x) + (y-x)^\top g_2(x) + \frac{\sigma_2}{2}\|y-x\|^2, \quad \forall x,y\in\XCal_2, \label{f2-strong-1} \\
f_3(y) &\geq& f_3(x) + (y-x)^\top g_3(x) + \frac{\sigma_3}{2}\|y-x\|^2, \quad \forall x,y\in\XCal_3, \label{f3-strong-1}
\end{eqnarray}
or equivalently,
\begin{eqnarray}
(y-x)^\top (g_2(y)-g_2(x)) &\geq& \sigma_2 \|y-x\|^2, \quad \forall x,y\in\XCal_2, \label{f2-strong-2} \\
(y-x)^\top (g_3(y)-g_3(x)) &\geq& \sigma_3 \|y-x\|^2, \quad \forall x,y\in\XCal_3,  \label{f3-strong-2}
\end{eqnarray}
where $g_2(x)\in \partial f_2(x)$ and $g_3(x) \in \partial f_3(x)$ are the subgradients of $f_2$ and $f_3$ respectively.
\end{assumption}

\blue{
In our analysis, the following well-known identity is used frequently,
\be\label{identity-4}
 (w_1-w_2)^{\top}(w_3-w_4)= \frac{1}{2}\left(\|w_1-w_4\|^{2}-\|w_1-w_3\|^{2}\right)+\frac{1}{2}\left(\|w_3-w_2\|^{2}-\|w_4-w_2\|^{2}\right).
\ee
}

{\bf Notations. }
For simplicity, we use the following notation to denote the stacked vectors or tuples:
\[ u = \left(\begin{array}{c} x_{1} \\ x_{2} \\ x_{3} \end{array} \right),
u^k = \left(\begin{array}{c} x_{1}^k \\ x_{2}^k \\ x_{3}^k \end{array} \right),
u^* = \left(\begin{array}{c} x_{1}^* \\ x_{2}^* \\ x_{3}^* \end{array} \right).\]
We denote by $f(u)\equiv f_1(x_{1})+f_{2}(x_{2})+f_{3}(x_{3})$ the objective function of problem \eqref{prob:P}; $g_i$ is a subgradient of $f_i$;
$\lambda_{\max}(B)$ denotes the largest eigenvalue of a real symmetric matrix $B$; $\|x\|$ denotes the Euclidean norm of $x$.

\section{Ergodic Convergence Rate of ADMM}\label{sec:ergodic}

In this section, we prove the $O(1/t)$ convergence rate of ADMM \eqref{eq::update_x}-\eqref{eq::update_lambda} in the ergodic sense.

\begin{lemma}\label{lemma1}
Assume that $\gamma \leq \min\left\{\frac{\sigma_2}{2\lambda_{\max}(A_{2}^{\top}A_{2})}, \frac{\sigma_3}{2\lambda_{\max}(A_{3}^{\top}A_{3})}\right\}$, where $\sigma_2$ and $\sigma_3$ are defined in Assumption \ref{assump:strongly-convex}. Let $(x_1^{k+1},x_2^{k+1},x_3^{k+1},\lambda^{k+1})\in\Omega$ be generated by ADMM from given $(x_{2}^{k},x_{3}^{k},\lambda^{k})$. \blue{Then, for any primal optimal solution $u^*=(x_1^*,x_2^*,x_3^*)$ of \eqref{prob:P} and $\lambda\in\br^p$, it holds that}
\begin{eqnarray}
& & f(u^{*})-f(u^{k+1})+\left(\begin{array}{c} x_{1}^{*}-x_{1}^{k+1} \\ x_{2}^{*}-x_{2}^{k+1} \\ x_{3}^{*}-x_{3}^{k+1} \\ \lambda-\lambda^{k+1}\end{array} \right)^{\top}
\left(\begin{array}{c} -A_{1}^{\top}\lambda^{k+1} \\ -A_{2}^{\top}\lambda^{k+1} \\ -A_{3}^{\top}\lambda^{k+1} \\ A_{1}x_{1}^{k+1}+A_{2}x_{2}^{k+1}+A_{3}x_{3}^{k+1}-b \end{array} \right) \nonumber \\
& & + \frac{1}{2\gamma}\left(\|\lambda-\lambda^{k}\|^{2}-\|\lambda-\lambda^{k+1}\|^{2}\right)+\frac{\gamma}{2}\left(\| A_{1}x_{1}^{*}+A_{2}x_{2}^{*}+A_{3}x_{3}^{k}-b\|^{2}-\| A_{1}x_{1}^{*}+A_{2}x_{2}^{*}+A_{3}x_{3}^{k+1}-b\|^{2}\right) \nonumber \\
& & +\frac{\gamma}{2}\left(\| A_{1}x_{1}^{*}+A_{2}x_{2}^{k}+A_{3}x_{3}^{k}-b\|^{2}-\| A_{1}x_{1}^{*}+A_{2}x_{2}^{k+1}+A_{3}x_{3}^{k+1}-b\|^{2}\right) \nonumber \\
&\geq&  \frac{\gamma}{2}\| A_{1}x_{1}^{k+1}+A_{2}x_{2}^{k}+A_{3}x_{3}^{k}-b\|^{2}. \label{1}
\end{eqnarray}
\end{lemma}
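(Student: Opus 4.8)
The plan is to derive \eqref{1} from the three reformulated optimality conditions \eqref{opt-x1-lambda}--\eqref{opt-x3-lambda}. First I would evaluate each of them at the feasible point $x_i=x_i^*$ and add them up, obtaining
$$\sum_{i=1}^3 (x_i^*-x_i^{k+1})^\top g_i(x_i^{k+1}) - \sum_{i=1}^3 (x_i^*-x_i^{k+1})^\top A_i^\top\lambda^{k+1} + C \geq 0,$$
where $C$ collects the three mixed terms $\gamma(x_1^*-x_1^{k+1})^\top A_1^\top A_2(x_2^k-x_2^{k+1})$, $\gamma(x_1^*-x_1^{k+1})^\top A_1^\top A_3(x_3^k-x_3^{k+1})$ and $\gamma(x_2^*-x_2^{k+1})^\top A_2^\top A_3(x_3^k-x_3^{k+1})$. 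Invoking convexity of $f_1$ together with the strong-convexity inequalities \eqref{f2-strong-1}--\eqref{f3-strong-1}, I would bound $\sum_i (x_i^*-x_i^{k+1})^\top g_i(x_i^{k+1})$ from above by $f(u^*)-f(u^{k+1})-\frac{\sigma_2}{2}\|x_2^{k+1}-x_2^*\|^2-\frac{\sigma_3}{2}\|x_3^{k+1}-x_3^*\|^2$. Substituting shows that the portion $f(u^*)-f(u^{k+1})-\sum_{i=1}^3(x_i^*-x_i^{k+1})^\top A_i^\top\lambda^{k+1}$ appearing on the left of \eqref{1} is at least $\frac{\sigma_2}{2}\|x_2^{k+1}-x_2^*\|^2+\frac{\sigma_3}{2}\|x_3^{k+1}-x_3^*\|^2-C$.

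Next I would dispose of the multiplier terms. Using the dual update \eqref{eq::update_lambda}, the residual $r^{k+1}:=A_1x_1^{k+1}+A_2x_2^{k+1}+A_3x_3^{k+1}-b$ equals $\frac{1}{\gamma}(\lambda^k-\lambda^{k+1})$, so the remaining component $(\lambda-\lambda^{k+1})^\top r^{k+1}$ of the first line of \eqref{1} becomes $\frac{1}{\gamma}(\lambda-\lambda^{k+1})^\top(\lambda^k-\lambda^{k+1})$. Applying the identity \eqref{identity-4} with $(w_1,w_2,w_3,w_4)=(\lambda,\lambda^{k+1},\lambda^k,\lambda^{k+1})$ and combining with the term $\frac{1}{2\gamma}(\|\lambda-\lambda^k\|^2-\|\lambda-\lambda^{k+1}\|^2)$ already present in \eqref{1}, all $\lambda$-dependent quantities collapse to $\frac{\gamma}{2}\|r^{k+1}\|^2$.

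The remaining work is algebraic bookkeeping among the quadratic terms, and this is where I expect the main difficulty. Using feasibility of $u^*$ from \eqref{kkt}, i.e. $A_1x_1^*+A_2x_2^*+A_3x_3^*=b$, I would rewrite every residual in the two $\frac{\gamma}{2}(\|\cdot\|^2-\|\cdot\|^2)$ blocks of \eqref{1}, and also $C$, in terms of the vectors $A_2(x_2^{\bullet}-x_2^*)$ and $A_3(x_3^{\bullet}-x_3^*)$. Two cancellations should then drive the argument: first, all terms carrying the factor $A_1(x_1^{k+1}-x_1^*)$ cancel (which is precisely why no rank condition on $A_1$ and no strong convexity of $f_1$ is needed); second, the difference $\frac{\gamma}{2}\|r^{k+1}\|^2-\frac{\gamma}{2}\|A_1x_1^{k+1}+A_2x_2^k+A_3x_3^k-b\|^2$ cancels the block $\frac{\gamma}{2}(\|A_2(x_2^k-x_2^*)+A_3(x_3^k-x_3^*)\|^2-\|A_2(x_2^{k+1}-x_2^*)+A_3(x_3^{k+1}-x_3^*)\|^2)$. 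What survives is
$$\frac{\sigma_2}{2}\|x_2^{k+1}-x_2^*\|^2+\frac{\sigma_3}{2}\|x_3^{k+1}-x_3^*\|^2+\gamma\big(A_2(x_2^{k+1}-x_2^*)\big)^\top A_3(x_3^k-x_3^{k+1})+\frac{\gamma}{2}\|A_3(x_3^k-x_3^*)\|^2-\frac{\gamma}{2}\|A_3(x_3^{k+1}-x_3^*)\|^2,$$
and proving \eqref{1} reduces to showing this quantity is nonnegative.

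Finally I would establish this nonnegativity, which is where the hypothesis on $\gamma$ enters decisively. Completing the square by grouping the two $A_3$-terms with $A_2(x_2^{k+1}-x_2^*)$ rewrites the last three terms as $\frac{\gamma}{2}\|A_3(x_3^k-x_3^*)+A_2(x_2^{k+1}-x_2^*)\|^2-\frac{\gamma}{2}\|A_3(x_3^{k+1}-x_3^*)+A_2(x_2^{k+1}-x_2^*)\|^2$, whose first term is nonnegative and can be dropped, so they are at least $-\frac{\gamma}{2}\|A_2(x_2^{k+1}-x_2^*)+A_3(x_3^{k+1}-x_3^*)\|^2$; then $\|u+v\|^2\le 2\|u\|^2+2\|v\|^2$ gives the further bound $-\gamma\|A_2(x_2^{k+1}-x_2^*)\|^2-\gamma\|A_3(x_3^{k+1}-x_3^*)\|^2$. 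The assumption $\gamma\le\min\{\sigma_2/(2\lambda_{\max}(A_2^\top A_2)),\,\sigma_3/(2\lambda_{\max}(A_3^\top A_3))\}$ yields $\gamma\|A_i(x_i^{k+1}-x_i^*)\|^2\le\gamma\lambda_{\max}(A_i^\top A_i)\|x_i^{k+1}-x_i^*\|^2\le\frac{\sigma_i}{2}\|x_i^{k+1}-x_i^*\|^2$ for $i=2,3$, so these two negative quadratics are exactly absorbed by the two strong-convexity terms, leaving a nonnegative quantity and hence \eqref{1}. The main obstacle is thus not any single deep step but the careful tracking of the numerous mixed terms so that the two cancellations are exact and the leftover negative quadratics match precisely what the step-size bound can absorb.
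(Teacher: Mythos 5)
Your proposal is correct and takes essentially the same route as the paper's proof: the same three optimality conditions \eqref{opt-x1-lambda}--\eqref{opt-x3-lambda}, the same convexity/strong-convexity bounds at $u^*$, the identical collapse of the multiplier terms to $\frac{\gamma}{2}\|r^{k+1}\|^2$, the same single dropped square $\frac{\gamma}{2}\|A_2(x_2^{k+1}-x_2^*)+A_3(x_3^{k}-x_3^*)\|^2$ (which is exactly the paper's discarded term $\frac{\gamma}{2}\|A_1x_1+A_2x_2^{k+1}+A_3x_3^{k}-b\|^2$ evaluated at $u=u^*$), and the same final absorption of $\frac{\gamma}{2}\|A_2(x_2^{k+1}-x_2^*)+A_3(x_3^{k+1}-x_3^*)\|^2$ into the strong-convexity quadratics via $\|u+v\|^2\le 2\|u\|^2+2\|v\|^2$ and the hypothesis on $\gamma$. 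The only difference is organizational rather than substantive: you specialize to $u^*$ and invoke feasibility at the outset, tracking the cancellations (including the exact cancellation of all $A_1(x_1^{k+1}-x_1^*)$ terms) by direct expansion and completing the square, whereas the paper keeps $u$ general and applies the four-point identity \eqref{identity-4} to each cross term before setting $u=u^*$ --- the two computations are term-for-term equivalent.
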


\begin{proof}
Note that combining \eqref{opt-x1-lambda}-\eqref{opt-x3-lambda} yields
\begin{eqnarray}\label{1_3}
\begin{aligned}
& \left(\begin{array}{c} x_{1}-x_{1}^{k+1} \\ x_{2}-x_{2}^{k+1} \\ x_{3}-x_{3}^{k+1} \end{array} \right)^{\top}\left[
\left(\begin{array}{c} g_1 
(x_{1}^{k+1})-A_{1}^{\top}\lambda^{k+1} \\ 
g_{2}(x_{2}^{k+1})-A_{2}^{\top}\lambda^{k+1} \\ 
g_{3}(x_{3}^{k+1})-A_{3}^{\top}\lambda^{k+1} \end{array} \right) + \left(\begin{array}{ccc} \gamma A_{1}^{\top}A_{2} & \gamma A_{1}^{\top}A_{3} \\ 0 & \gamma A_{2}^{\top}A_{3} \\ 0 & 0 \end{array} \right)
\left(\begin{array}{c} x_{2}^{k}-x_{2}^{k+1} \\ x_{3}^{k}-x_{3}^{k+1} \end{array} \right)\right]\geq 0.
\end{aligned}
\end{eqnarray}
The key step in our proof is to bound the following two terms
\begin{displaymath}
(x_{1}-x_{1}^{k+1})^{\top}A_{1}^{\top}(A_{2}(x_{2}^{k}-x_{2}^{k+1})+A_{3}(x_{3}^{k}-x_{3}^{k+1})) \quad \mbox{ and } \quad (x_{2}-x_{2}^{k+1})^{\top}A_{2}^{\top}A_{3}(x_{3}^{k}-x_{3}^{k+1}).
\end{displaymath}

For the first term, we have
\begin{eqnarray*}
& & (x_{1}-x_{1}^{k+1})^{\top}A_{1}^{\top}\left[A_{2}(x_{2}^{k}-x_{2}^{k+1})+A_{3}(x_{3}^{k}-x_{3}^{k+1})\right] \\
& = & \left[(A_{1}x_{1}-b)-(A_{1}x_{1}^{k+1}-b)\right]^{\top}\left[(-A_{2}x_{2}^{k+1}-A_{3}x_{3}^{k+1})-(-A_{2}x_{2}^{k}-A_{3}x_{3}^{k})\right] \\
& = & \frac{1}{2}\left(\| A_{1}x_{1}+A_{2}x_{2}^{k}+A_{3}x_{3}^{k}-b\|^{2}-\| A_{1}x_{1}+A_{2}x_{2}^{k+1}+A_{3}x_{3}^{k+1}-b\|^{2}\right)\\
&  & +\frac{1}{2}\left(\| A_{1}x_{1}^{k+1}+A_{2}x_{2}^{k+1}+A_{3}x_{3}^{k+1}-b\|^{2} -\| A_{1}x_{1}^{k+1}+A_{2}x_{2}^{k}+A_{3}x_{3}^{k}-b\|^{2}\right)\\
& = & \frac{1}{2}\left(\| A_{1}x_{1}+A_{2}x_{2}^{k}+A_{3}x_{3}^{k}-b\|^{2}-\| A_{1}x_{1}+A_{2}x_{2}^{k+1}+A_{3}x_{3}^{k+1}-b\|^{2}\right) + \frac{1}{2\gamma^{2}}\|\lambda^{k+1}-\lambda^{k}\|^{2}\\
& & -\frac{1}{2}\| A_{1}x_{1}^{k+1}+A_{2}x_{2}^{k}+A_{3}x_{3}^{k}-b\|^{2},
\end{eqnarray*}
where in the second equality we used the identity \eqref{identity-4}, and the last equality follows from the updating formula for $\lambda^{k+1}$ in \eqref{eq::update_lambda}.

For the second term, we have
\begin{eqnarray*}
& & (x_{2}-x_{2}^{k+1})^{\top}A_{2}^{\top}A_{3}(x_{3}^{k}-x_{3}^{k+1}) \\
& = & ((A_{1}x_{1}+A_{2}x_{2}-b)-(A_{1}x_{1}+A_{2}x_{2}^{k+1}-b))^{\top}((-A_{3}x_{3}^{k+1})-(-A_{3}x_{3}^{k}))\\
& = & \frac{1}{2}\left(\| A_{1}x_{1}+A_{2}x_{2}+A_{3}x_{3}^{k}-b\|^{2}-\| A_{1}x_{1}+A_{2}x_{2}+A_{3}x_{3}^{k+1}-b\|^{2}\right)\\
&   & +\frac{1}{2}\left(\| A_{1}x_{1}+A_{2}x_{2}^{k+1}+A_{3}x_{3}^{k+1}-b\|^{2} -\| A_{1}x_{1}+A_{2}x_{2}^{k+1}+A_{3}x_{3}^{k}-b\|^{2}\right)\\
&\leq & \frac{1}{2}\left(\| A_{1}x_{1}+A_{2}x_{2}+A_{3}x_{3}^{k}-b\|^{2}-\| A_{1}x_{1}+A_{2}x_{2}+A_{3}x_{3}^{k+1}-b\|^{2}\right)\\
& & +\frac{1}{2}\| A_{1}x_{1}+A_{2}x_{2}^{k+1}+A_{3}x_{3}^{k+1}-b\|^{2},
\end{eqnarray*}
where in the second equality we applied the identity \eqref{identity-4}.

Therefore, we have
\begin{eqnarray}
& & (x_{1}-x_{1}^{k+1})^{\top}\gamma A_{1}^{\top}(A_{2}(x_{2}^{k}-x_{2}^{k+1})+A_{3}(x_{3}^{k}-x_{3}^{k+1}))+(x_{2}-x_{2}^{k+1})^{\top}\gamma A_{2}^{\top}A_{3}(x_{3}^{k}-x_{3}^{k+1}) \nonumber \\
 &\leq & \frac{\gamma}{2}\left(\| A_{1}x_{1}+A_{2}x_{2}^{k}+A_{3}x_{3}^{k}-b\|^{2}-\| A_{1}x_{1}+A_{2}x_{2}^{k+1}+A_{3}x_{3}^{k+1}-b\|^{2}\right) \nonumber \\
 & & +\frac{\gamma}{2}\left(\| A_{1}x_{1}+A_{2}x_{2}+A_{3}x_{3}^{k}-b\|^{2}-\| A_{1}x_{1}+A_{2}x_{2}+A_{3}x_{3}^{k+1}-b\|^{2}\right) \nonumber \\
 & & + \frac{1}{2\gamma}\|\lambda^{k+1}-\lambda^{k}\|^{2}+\frac{\gamma}{2}\| A_{1}x_{1}+A_{2}x_{2}^{k+1}+A_{3}x_{3}^{k+1}-b\|^{2} \nonumber \\
 & &  -\frac{\gamma}{2}\| A_{1}x_{1}^{k+1}+A_{2}x_{2}^{k}+A_{3}x_{3}^{k}-b\|^{2}. \label{1_3_1}
\end{eqnarray}
Combining \eqref{1_3_1}, \eqref{1_3} and \eqref{eq::update_lambda}, it holds for any $\lambda\in\br^p$ that
\begin{eqnarray}
& & \left(\begin{array}{c} x_{1}-x_{1}^{k+1} \\ x_{2}-x_{2}^{k+1} \\ x_{3}-x_{3}^{k+1} \\ \lambda-\lambda^{k+1}\end{array} \right)^{\top}
\left(\begin{array}{c} 
g_{1}(x_{1}^{k+1})-A_{1}^{\top}\lambda^{k+1} \\ 
g_{2}(x_{2}^{k+1})-A_{2}^{\top}\lambda^{k+1} \\ 
g_{3}(x_{3}^{k+1})-A_{3}^{\top}\lambda^{k+1} \\ A_{1}x_{1}^{k+1}+A_{2}x_{2}^{k+1}+A_{3}x_{3}^{k+1}-b \end{array} \right)+ \frac{1}{\gamma}(\lambda-\lambda^{k+1})^{\top}(\lambda^{k+1}-\lambda^{k}) \nonumber \\
& & +\frac{1}{2\gamma}\|\lambda^{k+1}-\lambda^{k}\|^{2} +\frac{\gamma}{2}\| A_{1}x_{1}+A_{2}x_{2}^{k+1}+A_{3}x_{3}^{k+1}-b\|^{2} \nonumber \\
& & +\frac{\gamma}{2}\left(\| A_{1}x_{1}+A_{2}x_{2}^{k}+A_{3}x_{3}^{k}-b\|^{2}-\| A_{1}x_{1}+A_{2}x_{2}^{k+1}+A_{3}x_{3}^{k+1}-b\|^{2}\right) \nonumber \\
& & +\frac{\gamma}{2}\left(\| A_{1}x_{1}+A_{2}x_{2}+A_{3}x_{3}^{k}-b\|^{2}-\| A_{1}x_{1}+A_{2}x_{2}+A_{3}x_{3}^{k+1}-b\|^{2}\right) \nonumber \\
& \geq & \frac{\gamma}{2}\| A_{1}x_{1}^{k+1}+A_{2}x_{2}^{k}+A_{3}x_{3}^{k}-b\|^{2}. \label{1_4}
\end{eqnarray}
\blue{Using the convexity of $f_1$ and the identity}
\begin{eqnarray*}
\frac{1}{\gamma}\left(\lambda-\lambda^{k+1}\right)^{\top}(\lambda^{k+1}-\lambda^{k})+\frac{1}{2\gamma}\|\lambda^{k+1}-\lambda^{k}\|^{2} = \frac{1}{2\gamma}\left(\|\lambda-\lambda^{k}\|^{2}-\|\lambda-\lambda^{k+1}\|^{2}\right),
\end{eqnarray*}
letting $u=u^{*}$ in \eqref{1_4}, and applying the facts that (invoking \eqref{f2-strong-1} and \eqref{f3-strong-1})
\begin{eqnarray*}\label{1_5_2}
\begin{aligned}
& f_{2}(x_{2}^{*})-f_{2}(x_{2}^{k+1})-\frac{\sigma_2}{2}\| x_{2}^{*}-x_{2}^{k+1}\|^{2} \geq (x_{2}^{*}-x_{2}^{k+1})^{\top}
g_{2}(x_{2}^{k+1}),\\
& f_{3}(x_{3}^{*})-f_{3}(x_{3}^{k+1})-\frac{\sigma_3}{2}\| x_{3}^{*}-x_{3}^{k+1}\|^{2} \geq (x_{3}^{*}-x_{3}^{k+1})^{\top} 
g_{3}(x_{3}^{k+1}),
\end{aligned}
\end{eqnarray*}
and
\begin{eqnarray*}\label{1_5_1}
& & \frac{\gamma}{2}\| A_{1}x_{1}^{*}+A_{2}x_{2}^{k+1}+A_{3}x_{3}^{k+1}-b\|^{2} \\
& = &  \frac{\gamma}{2}\| A_{2}(x_{2}^{k+1}-x_{2}^{*})+A_{3}(x_{3}^{k+1}-x_{3}^{*})\|^{2}\\
& \leq & \gamma(\lambda_{\max}(A_{2}^{\top}A_{2})\| x_{2}^{k+1}-x_{2}^{*}\|^{2}+\lambda_{\max}(A_{3}^{\top}A_{3})\| x_{3}^{k+1}-x_{3}^{*}\|^{2}),
\end{eqnarray*}
we obtain,
\begin{eqnarray*}
& & f(u^{*})-f(u^{k+1})+\left(\begin{array}{c} x_{1}^{*}-x_{1}^{k+1} \\ x_{2}^{*}-x_{2}^{k+1} \\ x_{3}^{*}-x_{3}^{k+1} \\ \lambda-\lambda^{k+1}\end{array} \right)^{\top}
\left(\begin{array}{c} -A_{1}^{\top}\lambda^{k+1} \\ -A_{2}^{\top}\lambda^{k+1} \\ -A_{3}^{\top}\lambda^{k+1} \\ A_{1}x_{1}^{k+1}+A_{2}x_{2}^{k+1}+A_{3}x_{3}^{k+1}-b \end{array} \right)\\
& & + \frac{1}{2\gamma}\left(\|\lambda-\lambda^{k}\|^{2}-\|\lambda-\lambda^{k+1}\|^{2}\right)+\frac{\gamma}{2}\left(\| A_{1}x_{1}^{*}+A_{2}x_{2}^{*}+A_{3}x_{3}^{k}-b\|^{2}-\| A_{1}x_{1}^{*}+A_{2}x_{2}^{*}+A_{3}x_{3}^{k+1}-b\|^{2}\right) \\
& & +\left(\gamma\lambda_{\max}(A_{2}^{\top}A_{2})-\frac{\sigma_2}{2}\right)\| x_{2}^{k+1}-x_{2}^{*}\|^{2}+\left(\gamma\lambda_{\max}(A_{3}^{\top}A_{3})-\frac{\sigma_3}{2}\right)\| x_{3}^{k+1}-x_{3}^{*}\|^{2}\\
& & +\frac{\gamma}{2}\left(\| A_{1}x_{1}^{*}+A_{2}x_{2}^{k}+A_{3}x_{3}^{k}-b\|^{2}-\| A_{1}x_{1}^{*}+A_{2}x_{2}^{k+1}+A_{3}x_{3}^{k+1}-b\|^{2}\right) \\
& \geq & \frac{\gamma}{2}\| A_{1}x_{1}^{k+1}+A_{2}x_{2}^{k}+A_{3}x_{3}^{k}-b\|^{2}.
\end{eqnarray*}
This together with the facts that $\gamma\lambda_{\max}(A_{2}^{\top}A_{2})-\frac{\sigma_2}{2} \leq 0$ and $\gamma\lambda_{\max}(A_{3}^{\top}A_{3})-\frac{\sigma_3}{2}\leq 0$ implies the desired inequality \eqref{1}.
\end{proof}

Now, we are ready to present the $O(1/t)$ ergodic convergence rate of the ADMM.

\begin{theorem}\label{thm-ergodic-3}
Assume that $\gamma \leq \min\left\{\frac{\sigma_2}{2\lambda_{\max}(A_{2}^{\top}A_{2})}, \frac{\sigma_3}{2\lambda_{\max}(A_{3}^{\top}A_{3})}\right\}$. Let $(x_1^{k+1},x_2^{k+1},x_3^{k+1},\lambda^{k+1})\in\Omega$ be generated by ADMM \eqref{eq::update_x}-\eqref{eq::update_lambda} from given $(x_2^{k},x_3^{k},\lambda^{k})$. For any integer $t>0$, let $\bar{u}^{t}=(\bar{x}_{1}^{t}, \bar{x}_{2}^{t}, \bar{x}_{3}^{t})$ and $\bar{\lambda}^{t}$ be defined as
\begin{eqnarray*}
\bar{x}_{1}^{t}=\frac{1}{t+1}\sum\limits_{k=0}^{t}x_{1}^{k+1},\quad
\bar{x}_{2}^{t}=\frac{1}{t+1}\sum\limits_{k=0}^{t}x_{2}^{k+1},\quad
\bar{x}_{3}^{t}=\frac{1}{t+1}\sum\limits_{k=0}^{t}x_{3}^{k+1},\quad
\bar{\lambda}^{t}=\frac{1}{t+1}\sum\limits_{k=0}^{t}\lambda^{k+1}.
\end{eqnarray*}
Then, for any $(u^*,\lambda^*) \in\Omega^{*}$, by defining $\rho:=\|\lambda^*\|+1$, we have
\begin{eqnarray*}
0 & \leq & f(\bar{u}^{t})-f(u^{*})+\rho\| A_{1}\bar{x}_{1}^{t}+A_{2}\bar{x}_{1}^{t}+A_{3}\bar{x}_{3}^{t}-b\| \\
& \leq & \frac{\gamma}{2(t+1)}\| A_3 x_{3}^{*}-A_3 x_{3}^{0}\|^{2}+\frac{\rho^{2}+\|\lambda^0\|^2}{\gamma (t+1)}+\frac{\gamma}{2(t+1)}\| A_{1}x_{1}^{*}+A_{2}x_{2}^{0}+A_{3}x_{3}^{0}-b\|^{2}.
\end{eqnarray*}
Note that this also implies that both the error of the objective function value and the residual of the equality constraint converge to $0$ with convergence rate $O(1/t)$, i.e.,
\be\label{ergodic-3}|f(\bar{u}^t)-f(u^*)| = O(1/t), \quad \mbox{ and } \quad \| A_{1}\bar{x}_{1}^{t}+A_{2}\bar{x}_{1}^{t}+A_{3}\bar{x}_{3}^{t}-b\|=O(1/t).\ee
\end{theorem}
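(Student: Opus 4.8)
The plan is to treat the master inequality \eqref{1} of Lemma \ref{lemma1} as the engine and perform the standard ``simplify, sum, telescope, average'' reduction, closing with a variational choice of the free multiplier $\lambda$. First I would simplify the inner-product term in \eqref{1}. Writing $r^{k+1}:=A_1x_1^{k+1}+A_2x_2^{k+1}+A_3x_3^{k+1}-b$ and using the feasibility $A_1x_1^*+A_2x_2^*+A_3x_3^*=b$, the three primal components collapse to $(r^{k+1})^\top\lambda^{k+1}$, so that together with the dual component $(\lambda-\lambda^{k+1})^\top r^{k+1}$ the entire inner product reduces to $\lambda^\top r^{k+1}$. Using $A_1x_1^*+A_2x_2^*=b-A_3x_3^*$ to rewrite the penultimate $\gamma/2$ bracket as $\frac{\gamma}{2}(\|A_3(x_3^k-x_3^*)\|^2-\|A_3(x_3^{k+1}-x_3^*)\|^2)$ and discarding the nonnegative right-hand side of \eqref{1}, I obtain an inequality bounding $f(u^{k+1})-f(u^*)-\lambda^\top r^{k+1}$ above by a sum of three differences of consecutive squared quantities.

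Next I would sum this over $k=0,\ldots,t$; all three brackets telescope, and dropping the nonpositive terminal terms leaves only the initial quantities $\frac{1}{2\gamma}\|\lambda-\lambda^0\|^2$, $\frac{\gamma}{2}\|A_3(x_3^0-x_3^*)\|^2$, and $\frac{\gamma}{2}\|A_1x_1^*+A_2x_2^0+A_3x_3^0-b\|^2$ as the upper bound. Dividing by $t+1$ and invoking the convexity of $f$ (Jensen, $f(\bar u^t)\le\frac{1}{t+1}\sum_k f(u^{k+1})$) together with the linearity of the residual ($\frac{1}{t+1}\sum_k r^{k+1}=A_1\bar x_1^t+A_2\bar x_2^t+A_3\bar x_3^t-b=:\bar r$) converts the left-hand side into $f(\bar u^t)-f(u^*)-\lambda^\top\bar r$.

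Since this holds for every $\lambda\in\br^p$, the crucial device is to set $\lambda=-\rho\,\bar r/\|\bar r\|$ (and $\lambda=0$ when $\bar r=0$): this turns $-\lambda^\top\bar r$ into $\rho\|\bar r\|$ while keeping $\|\lambda\|=\rho$, so that $\|\lambda-\lambda^0\|^2\le 2\rho^2+2\|\lambda^0\|^2$ reproduces exactly the $\frac{\rho^2+\|\lambda^0\|^2}{\gamma(t+1)}$ term of the stated bound, yielding the claimed upper estimate. For the lower bound $0\le f(\bar u^t)-f(u^*)+\rho\|\bar r\|$ I would argue separately from the KKT system \eqref{kkt}: convexity of each $f_i$ together with $(x_i-x_i^*)^\top(g_i(x_i^*)-A_i^\top\lambda^*)\ge 0$ gives $f(u)-f(u^*)\ge(\lambda^*)^\top(\textstyle\sum_iA_ix_i-b)$ for every $u\in\XCal_1\times\XCal_2\times\XCal_3$; taking $u=\bar u^t$, Cauchy--Schwarz and $\rho=\|\lambda^*\|+1$ give $f(\bar u^t)-f(u^*)+\rho\|\bar r\|\ge(\rho-\|\lambda^*\|)\|\bar r\|=\|\bar r\|\ge 0$. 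The two-sided estimate then immediately yields $\|\bar r\|=O(1/t)$ and hence $|f(\bar u^t)-f(u^*)|=O(1/t)$, i.e.\ \eqref{ergodic-3}.

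Because all of the analytic work has already been absorbed into Lemma \ref{lemma1}, the only genuinely delicate points are these two: the fixed (non-iterate) choice of $\lambda$ that trades the linear term $-\lambda^\top\bar r$ for the constraint penalty $\rho\|\bar r\|$ while keeping $\|\lambda\|$ controlled, and the independent saddle-point argument from \eqref{kkt} that supplies the matching lower bound --- it is this second ingredient that upgrades a one-sided decrease into the two-sided rate asserted in the theorem.
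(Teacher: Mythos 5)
Your proposal is correct and follows essentially the same route as the paper: collapsing the inner product in Lemma \ref{lemma1} to $\lambda^\top r^{k+1}$ via primal feasibility, telescoping the three bracketed differences over $k=0,\ldots,t$, averaging with Jensen, choosing $\lambda=-\rho\,\bar r/\|\bar r\|$ so that $\|\lambda-\lambda^0\|^2\le 2\rho^2+2\|\lambda^0\|^2$ reproduces the $(\rho^2+\|\lambda^0\|^2)/(\gamma(t+1))$ term, and closing with the saddle inequality $f(\bar u^t)-f(u^*)\ge(\lambda^*)^\top\bar r\ge-\|\lambda^*\|\|\bar r\|$. The only cosmetic difference is that you obtain this last inequality directly from \eqref{kkt} and convexity, whereas the paper derives the same estimate via weak duality and, redundantly, via the perturbation function $v(\xi)$ with $\lambda^*\in\partial v(0)$; your shortcut loses nothing.
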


\begin{proof}
Because $(u^{k},\lambda^k)\in\Omega$, it holds that $(\bar{u}^{t},\bar{\lambda}^t)\in\Omega$ for all $t\geq 0$.
\blue{By Lemma \ref{lemma1}, the last equation of \eqref{kkt}, and invoking the convexity of function $f(\cdot)$, we have}
\begin{eqnarray}\label{new-added}
&   & f(u^*)-f(\bar{u}^t) + \lambda^\top (A_1\bar{x}_1^t+A_2\bar{x}_2^t+A_3\bar{x}_3^t-b) \\
& = & f(u^{*})-f(\bar{u}^{t})+\left(\begin{array}{c} x_{1}^{*}-\bar{x}_{1}^{t} \\ x_{2}^{*}-\bar{x}_{2}^{t} \\ x_{3}^{*}-\bar{x}_{3}^{t} \\ \lambda-\bar{\lambda}^{t}\end{array} \right)^{\top}
\left(\begin{array}{c} -A_{1}^{\top}\bar{\lambda}^{t} \\ -A_{2}^{\top}\bar{\lambda}^{t} \\ -A_{3}^{\top}\bar{\lambda}^{t} \\ A_{1}\bar{x}_{1}^{t}+A_{2}\bar{x}_{2}^{t}+A_{3}\bar{x}_{3}^{t}-b \end{array} \right) \nonumber \\
& \geq & \frac{1}{t+1}\sum\limits_{k=0}^{t}\left[f(u^{*})-f(u^{k+1})+\left(\begin{array}{c} x_{1}^{*}-x_{1}^{k+1} \\ x_{2}^{*}-x_{2}^{k+1} \\ x_{3}^{*}-x_{3}^{k+1} \\ \lambda-\lambda^{k+1}\end{array} \right)^{\top}
\left(\begin{array}{c} -A_{1}^{\top}\lambda^{k+1} \\ -A_{2}^{\top}\lambda^{k+1} \\ -A_{3}^{\top}\lambda^{k+1} \\ A_{1}x_{1}^{k+1}+A_{2}x_{2}^{k+1}+A_{3}x_{3}^{k+1}-b \end{array} \right)\right]\nonumber\\
&\geq &\frac{1}{t+1}\sum\limits_{k=0}^{t}\left[\frac{1}{2\gamma}(\|\lambda-\lambda^{k+1}\|^{2}-\|\lambda-\lambda^{k}\|^{2}) \right.\nonumber \\
& & +\frac{\gamma}{2}\left(\| A_{1}x_{1}^{*}+A_{2}x_{2}^{*}+A_{3}x_{3}^{k+1}-b\|^{2}-\| A_{1}x_{1}^{*}+A_{2}x_{2}^{*}+A_{3}x_{3}^{k}-b\|^{2}\right) \nonumber \\
& & \left. +\frac{\gamma}{2}\left(\| A_{1}x_{1}^{*}+A_{2}x_{2}^{k+1}+A_{3}x_{3}^{k+1}-b\|^{2}-\| A_{1}x_{1}^{*}+A_{2}x_{2}^{k}+A_{3}x_{3}^{k}-b\|^{2}\right)\right]\nonumber\\
& \geq & -\frac{1}{2\gamma(t+1)}\|\lambda-\lambda^{0}\|^{2}-\frac{\gamma}{2(t+1)}\| A_{1}x_{1}^{*}+A_{2}x_{2}^{*}+A_{3}x_{3}^{0}-b\|^{2}-\frac{\gamma}{2(t+1)}\| A_{1}x_{1}^{*}+A_{2}x_{2}^{0}+A_{3}x_{3}^{0}-b\|^{2}.\nonumber
\end{eqnarray}
Note that this inequality holds for all $\lambda\in\br^{p}$. 
From weak duality of \eqref{prob:P} we obtain
\[0 \geq f(u^*)-f(\bar{u}^t)+(\lambda^*)^\top(A_{1}\bar{x}_{1}^{t}+A_{2}\bar{x}_{1}^{t}+A_{3}\bar{x}_{3}^{t}-b),\]
which implies that
\be \label{ergodic-3-inequality-1-1} 0 \leq f(\bar{u}^{t})-f(u^{*})+\rho\| A_{1}\bar{x}_{1}^{t}+A_{2}\bar{x}_{1}^{t} +A_{3}\bar{x}_{3}^{t}-b\|,\ee
because $\rho=\|\lambda^*\|+1$. Moreover,
by letting $\lambda:=-\rho(A_1\bar{x}_1^t+A_2\bar{x}_2^t+A_3\bar{x}_3^t-b)/\|A_1\bar{x}_1^t+A_2\bar{x}_2^t+A_3\bar{x}_3^t-b\|_2$ in \eqref{new-added}, and using $A_1x^*_1+A_2x^*_2+A_3x^*_3=b$, we obtain
\begin{align}\label{ergodic-3-inequality-1}
& f(\bar{u}^{t})-f(u^{*})+\rho\| A_{1}\bar{x}_{1}^{t}+A_{2}\bar{x}_{1}^{t} +A_{3}\bar{x}_{3}^{t}-b\| \nonumber \\
\leq & \frac{\rho^{2}+\|\lambda^0\|^2}{\gamma (t+1)} +\frac{\gamma}{2(t+1)}\| A_3( x_{3}^{*}-x_{3}^{0})\|^{2}+\frac{\gamma}{2(t+1)}\| A_{2}(x_{2}^{*}-x_2^0)+A_3(x^*_3-x_{3}^{0})\|^{2}.
\end{align}

We now define the function
\[v(\xi) = \min \{ f(u) | A_1 x_1 + A_2 x_2 + A_3 x_3 - b = \xi, x_1\in \XCal_1, x_2\in \XCal_2, x_3\in \XCal_3 \}.\]
It is easy to verify that $v$ is convex, $v(0)=f(u^*)$, and $\lambda^* \in \partial v(0)$.
Therefore, from the convexity of $v$, it holds that
\be\label{v-convex} v(\xi) \ge v(0) + \langle \lambda^*, \xi \rangle \ge f(u^*) - \| \lambda^*\| \|\xi\|.\ee
Let $\bar{\xi} = A_1 \bar{x}_1 + A_2 \bar{x}_2 + A_3\bar{x}_3 - b$, we have $f(\bar{u}^t) \ge v(\bar{\xi})$.
Therefore, by denoting the constant
\[C:=\frac{\gamma}{2}\|A_3x_3^*-A_3x_3^0\|^2 + \frac{\|\lambda^0\|^2}{\gamma} + \frac{\gamma}{2}\|A_1x_1^*+A_2x_2^0+A_3x_3^0-b\|^2,\]
and combining \eqref{ergodic-3-inequality-1-1}, \eqref{ergodic-3-inequality-1} and \eqref{v-convex}, we get
\[\frac{C+\rho^2/\gamma}{t+1} - \rho \| \bar{\xi}\| \geq f(\bar{u}^t) - f(u^*) \geq - \| \lambda^*\| \| \bar{\xi} \|,\]
which, by using $\rho=\|\lambda^*\|+1$, yields,
\be\label{infeasibility} \|A_1 \bar{x}_1 + A_2 \bar{x}_2 + A_3\bar{x}_3 - b\| = \|\bar{\xi}\| \leq \frac{C+\rho^2/\gamma}{t+1}. \ee
Moreover, by combining \eqref{ergodic-3-inequality-1-1}, \eqref{ergodic-3-inequality-1} and \eqref{infeasibility}, one obtains that
\be\label{obj-diff} -\frac{\rho C+\rho^3/\gamma}{t+1} \leq f(\bar{u}^t) - f(u^*) \leq \frac{C+\rho^2/\gamma}{t+1}. \ee
As a result, \eqref{ergodic-3} follows immediately from \eqref{infeasibility} and \eqref{obj-diff}.
\end{proof}

Therefore, we have established the $O(1/t)$ convergence rate of the ADMM \eqref{eq::update_x}-\eqref{eq::update_lambda} in an ergodic sense. Our proof is readily extended to the case of $N$-block ADMM \eqref{admm-N}. The following theorem shows the $O(1/t)$ convergence rate of $N$-block ADMM \eqref{admm-N}. We omit the proof here for the sake of succinctness.

\begin{theorem}
Assume that
\[
\gamma\leq\min_{i=2,\cdots,N-1}\left\{\frac{2\sigma_{i}}{(2N-i)(i-1)\lambda_{\max}(A_{i}^{\top}A_{i})}, \frac{2\sigma_{N}}{(N-2)(N+1)\lambda_{\max}(A_{N}^{\top}A_{N})}\right\},
\]
where $\sigma_{i}$ is the strong convexity parameter of $f_{i}$, $i=2,\ldots,N$.  Let $(x_{1}^{k+1},x_{2}^{k+1},x_{3}^{k+1},\cdots,x_{N}^{k+1}, \lambda^{k+1})\in\Omega$ be generated by the $N$-block ADMM \eqref{admm-N}. For any integer $t>0$, we define
\begin{eqnarray*}
\bar{x}_{i}^{t}=\frac{1}{t+1}\sum\limits_{k=0}^{t}x_{i}^{k+1}, 1\leq i \leq N, \quad\quad
\bar{\lambda}^{t}=\frac{1}{t+1}\sum\limits_{k=0}^{t}\lambda^{k+1}.
\end{eqnarray*}
Then, for $\rho:=\|\lambda^*\|+1$, it holds that
\begin{eqnarray*}
\sum\limits_{i=1}^{N}(f_{i}(\bar{x}_{i}^{t})-f_{i}(x_{i}^{*}))+\rho\left\| \sum\limits_{i=1}^{N}A_{i}\bar{x}_{i}^{t}-b\right\|\leq\frac{\gamma}{2(t+1)}\sum\limits_{i=1}^{N-1}
\left\|\sum\limits_{m=i+1}^{N}A_{m}(x_{m}^{0}-x_{m}^{*})\right\|^{2}
+\frac{\rho^{2}+\|\lambda^0\|^2}{\gamma (t+1)}.
\end{eqnarray*}
Similarly as Theorem \ref{thm-ergodic-3}, this also implies that $N$-block ADMM \eqref{admm-N} converges with rate $O(1/t)$ in terms both error of objective function value and the residual of the equality constraints, i.e., it holds that
\[|f(\bar{u}^t)-f(u^*)| = O(1/t), \quad \mbox{ and } \quad \left\| \sum\limits_{i=1}^{N}A_{i}\bar{x}_{i}^{t}-b\right\|=O(1/t).\]
\end{theorem}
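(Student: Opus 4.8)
The plan is to reproduce, at the level of general $N$, the exact two-step structure used for the $3$-block case: first establish an $N$-block analogue of Lemma \ref{lemma1} (a one-step estimate comparing the current iterate against a fixed optimal solution), and then run the ergodic averaging argument of Theorem \ref{thm-ergodic-3} essentially verbatim. The only genuinely new work is in the first step; the second step is insensitive to the number of blocks, so the bulk of the effort goes into the one-step lemma.

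For that lemma, I would start from the first-order optimality condition of each subproblem in \eqref{admm-N} and use the multiplier update to replace $\lambda^k$ by $\lambda^{k+1}$, exactly as \eqref{opt-x1}--\eqref{opt-x3} were turned into \eqref{opt-x1-lambda}--\eqref{opt-x3-lambda}. This produces, for block $i$, the cross term $\gamma A_i^\top \sum_{j=i+1}^N A_j(x_j^k - x_j^{k+1})$, so that summing the $N$ inequalities leaves the aggregate cross term $\sum_{i=1}^{N-1}(x_i - x_i^{k+1})^\top \gamma A_i^\top \sum_{j=i+1}^N A_j(x_j^k - x_j^{k+1})$. Each summand is then rewritten through the identity \eqref{identity-4} into a difference of squared residual norms (which telescope across consecutive iterates) plus leftover terms; this is the direct generalization of the two explicit computations carried out in the proof of Lemma \ref{lemma1}. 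Setting $u = u^*$, invoking the strong convexity inequalities of Assumption \ref{assump:strongly-convex} (now extended to $f_2,\ldots,f_N$) to generate the terms $-\frac{\sigma_i}{2}\|x_i^{k+1} - x_i^*\|^2$, and bounding every residual square of the form $\|\sum_j A_j(x_j^{k+1} - x_j^*)\|^2$ by $\sum_j \lambda_{\max}(A_j^\top A_j)\|x_j^{k+1} - x_j^*\|^2$ up to the multiplicity constant from expanding the square, one collects a net coefficient in front of each $\|x_i^{k+1} - x_i^*\|^2$. The stated bound on $\gamma$ is precisely what forces every such coefficient to be nonpositive, allowing those terms to be dropped and yielding the desired one-step inequality.

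The ergodic theorem then follows by the same route as Theorem \ref{thm-ergodic-3}: average the one-step inequality over $k = 0,\ldots,t$, telescope the residual differences so that only the initial residuals $\|\sum_{m=i+1}^N A_m(x_m^0 - x_m^*)\|^2$ survive, use convexity of $f$ to pass to the averaged iterate $\bar u^t$, and close the argument with weak duality (for the lower bound $f(\bar u^t) - f(u^*) + \rho\|\sum_i A_i \bar x_i^t - b\| \ge 0$) together with the value-function convexity bound \eqref{v-convex} and the maximizing choice $\lambda = -\rho(\sum_i A_i \bar x_i^t - b)/\|\sum_i A_i \bar x_i^t - b\|$, which converts the $\lambda$-term into $\rho$ times the constraint residual. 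None of these manipulations depends on $N$ once the one-step lemma is in hand.

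The main obstacle is the combinatorial bookkeeping hidden in the one-step lemma. Unlike the $N=3$ case, where only two cross terms appear and each contributes a single residual square, for general $N$ the $\binom{N}{2}$ cross terms and the partial-sum residuals produced by \eqref{identity-4} each contribute several copies of $\|x_i^{k+1} - x_i^*\|^2$, and one must count these multiplicities exactly. The claim to verify is that the total coefficient of $\|x_i^{k+1}-x_i^*\|^2$ equals $\frac{(2N-i)(i-1)}{4}\gamma\lambda_{\max}(A_i^\top A_i) - \frac{\sigma_i}{2}$ for $2 \le i \le N-1$ and $\frac{(N-2)(N+1)}{4}\gamma\lambda_{\max}(A_N^\top A_N) - \frac{\sigma_N}{2}$ for $i = N$; requiring these to be $\le 0$ reproduces exactly the stated range for $\gamma$. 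A useful consistency check is that for $N=3$ both coefficients collapse to $\gamma\lambda_{\max}(A_i^\top A_i) - \frac{\sigma_i}{2}$, recovering the threshold $\frac{\sigma_i}{2\lambda_{\max}(A_i^\top A_i)}$ of Lemma \ref{lemma1}. Pinning down this tally is the crux of the extension, and it is precisely this counting that determines the admissible $\gamma$.
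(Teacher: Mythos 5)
Your proposal is correct and follows exactly the route the paper intends (and omits): extend Lemma \ref{lemma1} by summing the $N$ optimality conditions, rewriting each cross term $(x_i-x_i^{k+1})^\top \gamma A_i^\top \sum_{j>i}A_j(x_j^k-x_j^{k+1})$ via \eqref{identity-4} into telescoping residuals plus a leftover square $\frac{\gamma}{2}\bigl\|\sum_{j\geq i}A_j(x_j^{k+1}-x_j^*)\bigr\|^2$, and then repeating the averaging/weak-duality argument of Theorem \ref{thm-ergodic-3} verbatim. Your multiplicity tally is also right: bounding the leftover for cross term $i$ by $(N-i+1)$ times the individual squares and summing over $i$ gives the coefficient $\sum_{i=2}^{j}(N-i+1)=\frac{(j-1)(2N-j)}{2}$ for block $j\leq N-1$ (and $\frac{(N-2)(N+1)}{2}$ for $j=N$), which reproduces exactly the stated bound on $\gamma$ and collapses to the $N=3$ thresholds.
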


\section{Non-Ergodic Convergence Rate of ADMM}\label{sec:non-ergodic}

In this section, we prove an $o(1/k)$ non-ergodic convergence rate for ADMM \eqref{eq::update_x}-\eqref{eq::update_lambda}.


Let us first observe the following (see also Lemma 4.1 in \cite{Han-Yuan-note-2012}). Suppose at the $(k+1)$-th iteration of ADMM \eqref{eq::update_x}-\eqref{eq::update_lambda}, we have
\begin{equation}\label{kkt-reduced}
\left\{
\begin{array}{l}
A_{2}x_{2}^{k+1}-A_{2}x_{2}^{k}=0,\\
A_{3}x_{3}^{k+1}-A_{3}x_{3}^{k}=0,\\
A_{1}x_{1}^{k+1}+A_{2}x_{2}^{k+1}+A_{3}x_{3}^{k+1}-b=0.
\end{array}
\right.
\end{equation}
Then, \eqref{opt-x1-lambda}-\eqref{opt-x3-lambda} would immediately lead to
\[
\left\{
\begin{array}{ll}
 (x_{1}-x_{1}^{k+1})^{\top}\left[ g_{1}(x_{1}^{k+1})-A_{1}^{\top}\lambda^{k+1} \right] \geq 0,       & \forall x_{1}\in\XCal_{1},  \\
 (x_{2}-x_{2}^{k+1})^{\top}\left[ g_2(x_{2}^{k+1})-A_{2}^{\top}\lambda^{k+1} \right] \geq 0,          & \forall x_{2}\in\XCal_{2}, \\
 (x_{3}-x_{3}^{k+1})^{\top}\left[ g_{3}(x_{3}^{k+1})-A_{3}^{\top}\lambda^{k+1}\right] \geq 0, & \forall x_{3}\in\XCal_{3}.
\end{array}
\right.
\]
In other words, if \eqref{kkt-reduced} is satisfied, then $(x_1^{k+1},x_2^{k+1},x_3^{k+1},\lambda^{k+1})$ would have been already an optimal solution for \eqref{prob:P}.
It is therefore natural to introduce a residual for the linear system \eqref{kkt-reduced} as an optimality measure. Below is such a measure, to be denoted by $R_{k+1}$:
\begin{eqnarray}\label{def:Rk}
R_{k+1} := \| A_{1}x_{1}^{k+1}+A_{2}x_{2}^{k+1}+A_{3}x_{3}^{k+1}-b\|^{2}+2\| A_{2}x_{2}^{k+1}-A_{2}x_{2}^{k}\|^{2}+3\| A_{3}x_{3}^{k+1}-A_{3}x_{3}^{k}\|^{2}.
\end{eqnarray}
In the sequel, we will show that $R_k$ converges to $0$ at the rate $o(1/k)$. Note that this gives the convergence rate of ADMM \eqref{eq::update_x}-\eqref{eq::update_lambda} in non-ergodic sense.

We first show that $R_k$ is non-increasing.
\begin{lemma}\label{lemma5}
Assume $\gamma\leq\min\{\frac{\sigma_2}{\lambda_{\max}(A_{2}^{\top}A_{2})}, \frac{\sigma_3}{\lambda_{\max}(A_{3}^{\top}A_{3})}\}$. Let the sequence $\{x_1^k,x_2^k,x_3^k,\lambda^k\}$ be generated by ADMM \eqref{eq::update_x}-\eqref{eq::update_lambda}. It holds that $R_k$ defined in \eqref{def:Rk} is non-increasing, i.e.,
\begin{eqnarray}\label{Rk-decreasing}
R_{k+1} \leq R_{k}, \mbox{ $k=0,1,2,...$}. 
\end{eqnarray}
\end{lemma}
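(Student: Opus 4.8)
The plan is to derive the monotonicity by playing the first-order optimality conditions at iteration $k+1$ against those at iteration $k$. Concretely, I would write \eqref{opt-x1-lambda}--\eqref{opt-x3-lambda} once for the current step (superscripts $k{+}1$ and $k$) and once for the previous step (all superscripts lowered by one, i.e.\ involving $x_i^{k}$, $x_i^{k-1}$, $\lambda^{k}$). In the current-step inequalities I set the free variable $x_i := x_i^{k}$, and in the previous-step inequalities I set $x_i := x_i^{k+1}$; adding the two inequalities for each block $i=1,2,3$ produces, on the left, a monotonicity term $(x_i^{k+1}-x_i^{k})^\top\big(g_i(x_i^{k+1})-g_i(x_i^{k})\big)$ together with inner products of the matrix cross-terms against the successive differences $A_j(x_j^{k+1}-x_j^{k})$ and the multiplier difference $A_i^\top(\lambda^{k+1}-\lambda^{k})$.

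Next I would dispose of the subgradient terms using Assumption~\ref{assump:strongly-convex}: convexity of $f_1$ gives $(x_1^{k+1}-x_1^{k})^\top(g_1(x_1^{k+1})-g_1(x_1^{k}))\ge 0$, while \eqref{f2-strong-2} and \eqref{f3-strong-2} contribute the strong-convexity reserves $\sigma_2\|x_2^{k+1}-x_2^{k}\|^2$ and $\sigma_3\|x_3^{k+1}-x_3^{k}\|^2$. The multiplier differences are eliminated through the update rule \eqref{eq::update_lambda}, which converts $\lambda^{k+1}-\lambda^{k}$ and $\lambda^{k}-\lambda^{k-1}$ into the feasibility residuals $A_1x_1^{k+1}+A_2x_2^{k+1}+A_3x_3^{k+1}-b$ and its step-$k$ analogue. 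After these substitutions every surviving quantity is expressed through the $A_j$-images of the successive differences; the cross products $\gamma A_1^\top A_2$, $\gamma A_1^\top A_3$, $\gamma A_2^\top A_3$ are then handled by the elementary identity \eqref{identity-4} together with Young's inequality, so that each mixed term is split into squared norms of the individual differences $\|A_j(x_j^{k+1}-x_j^{k})\|^2$.

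Collecting everything should yield a descent inequality of the schematic form $R_k - R_{k+1} \ge (\text{nonnegative})$. The coefficients $1,2,3$ in the definition \eqref{def:Rk} are precisely the bookkeeping that makes this work: the change in $x_3$ feeds the optimality conditions of both block~$1$ and block~$2$ (through $A_1^\top A_3$ and $A_2^\top A_3$), the change in $x_2$ feeds only block~$1$ (through $A_1^\top A_2$), and the feasibility term carries weight one, so the extra copies generated by the cross terms are exactly absorbed. Finally, the hypothesis $\gamma\le\min\{\sigma_2/\lambda_{\max}(A_2^\top A_2),\,\sigma_3/\lambda_{\max}(A_3^\top A_3)\}$ guarantees that the strong-convexity reserves dominate the quadratic penalty terms $\gamma\|A_2(x_2^{k+1}-x_2^{k})\|^2$ and $\gamma\|A_3(x_3^{k+1}-x_3^{k})\|^2$ left over from the Young's-inequality bounds, keeping the right-hand side nonnegative. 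I expect the main obstacle to be exactly this last matching: tracking the precise constants produced by each application of \eqref{identity-4} and Young's inequality and verifying that, under the stated bound on $\gamma$, they telescope into the weighted residual $R_{k+1}\le R_k$ rather than into a larger expression. The choice of how to split each cross term, and hence which fraction of the strong-convexity budget it consumes, is what must be calibrated, and getting the weights $2$ and $3$ to come out is the crux.
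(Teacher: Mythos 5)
Your proposal is correct and follows essentially the same route as the paper's proof: pair the optimality conditions at iterations $k+1$ and $k$ with swapped test points for each block, use monotonicity of $\partial f_1$ and the strong-convexity reserves \eqref{f2-strong-2}--\eqref{f3-strong-2}, convert multiplier differences via \eqref{eq::update_lambda}, and handle the cross terms with \eqref{identity-4} and Young's inequality so that the bound $\gamma\le\min\left\{\frac{\sigma_2}{\lambda_{\max}(A_2^\top A_2)},\frac{\sigma_3}{\lambda_{\max}(A_3^\top A_3)}\right\}$ absorbs the leftovers, exactly as in the paper. The one execution point to watch is block $1$: since $f_1$ carries no strong-convexity budget, you must fold $\lambda^{k+1}-\lambda^{k}$ into the current-step cross terms so as to reconstruct the partial residual $A_1x_1^{k+1}+A_2x_2^{k}+A_3x_3^{k}-b$ and apply the identity \eqref{identity-4} to it (equivalently, use the pre-update form \eqref{opt-x1} directly, as the paper does), because the resulting $-\frac{\gamma}{2}\|A_1(x_1^{k+1}-x_1^{k})\|^2$ is precisely what cancels the $+\frac{\gamma}{2}\|A_1(x_1^{k+1}-x_1^{k})\|^2$ produced by Young's inequality on the previous-step cross terms --- splitting \emph{every} mixed term into squares of individual differences, as your plan literally states, would leave that term unabsorbed and also fail to generate the telescoping chain of partial residuals that yields the $-\frac{\gamma}{2}\|A_1x_1^{k+1}+A_2x_2^{k+1}+A_3x_3^{k+1}-b\|^2$ inside $-R_{k+1}$.
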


\begin{proof}
Letting $x_{1}=x_{1}^{k}$ in \eqref{opt-x1} yields,
\begin{eqnarray*}
\begin{aligned}
& (x_{1}^{k}-x_{1}^{k+1})^{\top}\left[g_{1}(x_{1}^{k+1})-A_{1}^{\top}\lambda^{k}+\gamma A_{1}^{\top}(A_{1}x_{1}^{k+1}+A_{2}x_{2}^{k}+A_{3}x_{3}^{k}-b)\right]\geq 0,
\end{aligned}
\end{eqnarray*}
with $g_1 \in \partial f_1$, which further implies that
\begin{eqnarray}
& & (x_{1}^{k+1}-x_{1}^{k})^{\top}g_{1}(x_{1}^{k+1}) \nonumber \\
& \leq & (x_{1}^{k}-x_{1}^{k+1})^{\top}(-A_{1}^{\top}\lambda^{k})+(x_{1}^{k}-x_{1}^{k+1})^{\top}\left[ \gamma A_{1}^{\top}(A_{1}x_{1}^{k+1}+A_{2}x_{2}^{k}+A_{3}x_{3}^{k}-b)\right]  \nonumber \\
& = & (A_{1}x_{1}^{k}-A_{1}x_{1}^{k+1})^{\top}(-\lambda^{k})+\gamma(A_{1}x_{1}^{k}-A_{1}x_{1}^{k+1})^{\top}(A_{1}x_{1}^{k+1}+A_{2}x_{2}^{k}+A_{3}x_{3}^{k}-b)\nonumber \\
& = & (A_{1}x_{1}^{k}-A_{1}x_{1}^{k+1})^{\top}(-\lambda^{k})+\frac{\gamma}{2}\left(\| A_{1}x_{1}^{k}+A_{2}x_{2}^{k}+A_{3}x_{3}^{k}-b\|^{2} \right. \nonumber \\
&   & - \left. \| A_{1}x_{1}^{k+1}+A_{2}x_{2}^{k}+A_{3}x_{3}^{k}-b\|^{2} -\| A_{1}x_{1}^{k}-A_{1}x_{1}^{k+1}\|^{2}\right), \label{2_1_2}
\end{eqnarray}
where the last equality is due to the identity \eqref{identity-4}.
Letting $x_{1}=x_{1}^{k+1}$ in \eqref{opt-x1-lambda} with $k+1$ changed to $k$ yields,
\begin{eqnarray*}
\begin{aligned}
& (x_{1}^{k+1}-x_{1}^{k})^{\top}\left[g_{1}(x_{1}^{k})-A_{1}^{\top}\lambda^{k}+\gamma A_{1}^{\top}A_{2}(x_{2}^{k-1}-x_{2}^{k})+\gamma A_{1}^{\top}A_{3}(x_{3}^{k-1}-x_{3}^{k})\right] \geq 0,
\end{aligned}
\end{eqnarray*}
which further implies that
\begin{eqnarray}
& & (x_{1}^{k}-x_{1}^{k+1})^{\top} g_{1}(x_{1}^{k}) \nonumber \\
& \leq & (x_{1}^{k+1}-x_{1}^{k})^{\top}(-A_{1}^{\top}\lambda^{k})+ \gamma (x_{1}^{k+1}-x_{1}^{k})^{\top}\left[A_{1}^{\top}A_{2}(x_{2}^{k-1}-x_{2}^{k})+A_{1}^{\top}A_{3}(x_{3}^{k-1}-x_{3}^{k})\right] \nonumber \\
& = & (A_{1}x_{1}^{k+1}-A_{1}x_{1}^{k})^{\top}(-\lambda^{k})+ \gamma (A_{1}x_{1}^{k+1}-A_{1}x_{1}^{k})^{\top}\left[ A_{2}(x_{2}^{k-1}-x_{2}^{k})+A_{3}(x_{3}^{k-1}-x_{3}^{k})\right] \nonumber \\
& \leq & (A_{1}x_{1}^{k+1}-A_{1}x_{1}^{k})^{\top}(-\lambda^{k})+ \frac{\gamma}{2}\left(\| A_{1}x_{1}^{k+1}-A_{1}x_{1}^{k}\|^{2} + \| A_{2}(x_{2}^{k-1}-x_{2}^{k})+A_{3}(x_{3}^{k-1}-x_{3}^{k})\|^{2}\right) \nonumber \\
& \leq & (A_{1}x_{1}^{k+1}-A_{1}x_{1}^{k})^{\top}(-\lambda^{k}) \nonumber \\
& & + \frac{\gamma}{2}\left(\| A_{1}x_{1}^{k+1}-A_{1}x_{1}^{k}\|^{2} + 2\| A_{2}(x_{2}^{k-1}-x_{2}^{k})\|^{2} +2\| A_{3}(x_{3}^{k-1}-x_{3}^{k})\|^{2}\right). \label{2_2_1}
\end{eqnarray}
Combining \eqref{2_1_2} and \eqref{2_2_1} gives
\begin{eqnarray}
& & (x_{1}^{k+1}-x_{1}^{k})^{\top}\left[g_{1}(x_{1}^{k+1})-g_{1}(x_{1}^{k})\right] \nonumber \\
&\leq & \frac{\gamma}{2}\left(\| A_{1}x_{1}^{k}+A_{2}x_{2}^{k}+A_{3}x_{3}^{k}-b\|^{2} - \| A_{1}x_{1}^{k+1}+A_{2}x_{2}^{k}+A_{3}x_{3}^{k}-b\|^{2} \right. \label{2} \\
& & + \left. 2\| A_{2}(x_{2}^{k-1}-x_{2}^{k})\|^{2}+2\| A_{3}(x_{3}^{k-1}-x_{3}^{k})\|^{2}\right).\nonumber
\end{eqnarray}

Letting $x_{2}=x_{2}^{k}$ in \eqref{opt-x2} yields,
\begin{eqnarray*}
\begin{aligned}
& (x_{2}^{k}-x_{2}^{k+1})^{\top}\left[ g_{2}(x_{2}^{k+1})-A_{2}^{\top}\lambda^{k}+\gamma A_{2}^{\top}(A_{1}x_{1}^{k+1}+A_{2}x_{2}^{k+1}+A_{3}x_{3}^{k}-b)\right] \geq 0,
\end{aligned}
\end{eqnarray*}
which further implies that
\begin{eqnarray}
& & (x_{2}^{k+1}-x_{2}^{k})^{\top}g_{2}(x_{2}^{k+1}) \nonumber \\
& \leq & (x_{2}^{k}-x_{2}^{k+1})^{\top}(-A_{2}^{\top}\lambda^{k})+(x_{2}^{k}-x_{2}^{k+1})^{\top}\left[ \gamma A_{2}^{\top}(A_{1}x_{1}^{k+1}+A_{2}x_{2}^{k+1}+A_{3}x_{3}^{k}-b)\right] \nonumber \\
& = & (A_{2}x_{2}^{k}-A_{2}x_{2}^{k+1})^{\top}(-\lambda^{k})+\gamma(A_{2}x_{2}^{k}-A_{2}x_{2}^{k+1})^{\top}(A_{1}x_{1}^{k+1}+A_{2}x_{2}^{k+1}+A_{3}x_{3}^{k}-b) \nonumber \\
& = & (A_{2}x_{2}^{k}-A_{2}x_{2}^{k+1})^{\top}(-\lambda^{k})+\frac{\gamma}{2}\left(\| A_{1}x_{1}^{k+1}+A_{2}x_{2}^{k}+A_{3}x_{3}^{k}-b\|^{2} \right. \nonumber \\
& & - \left. \| A_{1}x_{1}^{k+1}+A_{2}x_{2}^{k+1}+A_{3}x_{3}^{k}-b\|^{2} -\| A_{2}x_{2}^{k}-A_{2}x_{2}^{k+1}\|^{2}\right), \label{3_1_2}
\end{eqnarray}
where the last equality is due to the identity \eqref{identity-4}.
Letting $x_{2}=x_{2}^{k+1}$ in \eqref{opt-x2-lambda} with $k+1$ changed to $k$ yields,
\[
(x_{2}^{k+1}-x_{2}^{k})^{\top}\left[g_{2}(x_{2}^{k})-A_{2}^{\top}\lambda^{k}+\gamma A_{2}^{\top}A_{3}(x_{3}^{k-1}-x_{3}^{k})\right] \geq 0,
\]
which further implies that
\begin{eqnarray}
&      & (x_{2}^{k}-x_{2}^{k+1})^{\top} g_{2}(x_{2}^{k}) \nonumber \\
& \leq & (x_{2}^{k+1}-x_{2}^{k})^{\top}(-A_{2}^{\top}\lambda^{k})+ \gamma (x_{2}^{k+1}-x_{2}^{k})^{\top}\left[A_{2}^{\top}A_{3}(x_{3}^{k-1}-x_{3}^{k})\right] \nonumber\\
& = & (A_{2}x_{2}^{k+1}-A_{2}x_{2}^{k})^{\top}(-\lambda^{k})+ \gamma (A_{2}x_{2}^{k+1}-A_{2}x_{2}^{k})^{\top}(A_{3}x_{3}^{k-1}-A_{3}x_{3}^{k})\nonumber \\
& \leq & (A_{2}x_{2}^{k+1}-A_{2}x_{2}^{k})^{\top}(-\lambda^{k})+\frac{\gamma}{2}\left(\| A_{2}x_{2}^{k+1}-A_{2}x_{2}^{k}\|^{2}+\| A_{3}x_{3}^{k-1}-A_{3}x_{3}^{k}\|^{2}\right). \label{3_2_1}
\end{eqnarray}
Combining \eqref{3_1_2} and \eqref{3_2_1} gives
\begin{eqnarray}
& & (x_{2}^{k+1}-x_{2}^{k})^{\top}\left[g_{2}(x_{2}^{k+1})-g_{2}(x_{2}^{k})\right] \nonumber \\
& \leq & \frac{\gamma}{2}\left(\| A_{1}x_{1}^{k+1}+A_{2}x_{2}^{k}+A_{3}x_{3}^{k}-b\|^{2} - \| A_{1}x_{1}^{k+1}+A_{2}x_{2}^{k+1}+A_{3}x_{3}^{k}-b\|^{2} \right. \nonumber \\
& & \left. +\| A_{3}x_{3}^{k-1}-A_{3}x_{3}^{k}\|^{2}\right). \label{3}
\end{eqnarray}

Letting $x_3=x_3^k$ in \eqref{opt-x3-lambda} and $x_3=x_3^{k+1}$ in \eqref{opt-x3-lambda} with $k+1$ changed to $k$, and adding the two resulting inequalities, yields,
\begin{eqnarray}
& & (x_{3}^{k+1}-x_{3}^{k})^{\top}\left[g_{3}(x_{3}^{k+1})-g_{3}(x_{3}^{k})\right] \nonumber \\
& \leq & (x_{3}^{k+1}-x_{3}^{k})^{\top}(A_{3}^{\top}\lambda^{k+1}-A_{3}^{\top}\lambda^{k}) \nonumber \\
& = & (A_{3}x_{3}^{k+1}-A_{3}x_{3}^{k})^{\top}(\lambda^{k+1}-\lambda^{k}) \nonumber \\
& = & \gamma(A_{3}x_{3}^{k}-A_{3}x_{3}^{k+1})^{\top}\left(A_{1}x_{1}^{k+1}+A_{2}x_{2}^{k+1}+A_{3}x_{3}^{k+1}-b\right) \nonumber \\
& = & \frac{\gamma}{2}\left(\| A_{1}x_{1}^{k+1}+A_{2}x_{2}^{k+1}+A_{3}x_{3}^{k}-b\|^{2}-\| A_{1}x_{1}^{k+1}+A_{2}x_{2}^{k+1}+A_{3}x_{3}^{k+1}-b\|^{2} \right. \nonumber \\
&   & \left. -\| A_{3}x_{3}^{k}-A_{3}x_{3}^{k+1}\|^{2}\right), \label{4}
\end{eqnarray}
where the last equality is due to the identity \eqref{identity-4}.

Combining \eqref{2}, \eqref{3} and \eqref{4} yields,
\begin{eqnarray}
& & (x_{1}^{k+1}-x_{1}^{k})^{\top}\left[g_{1}(x_{1}^{k+1})-g_{1}(x_{1}^{k})\right]+ (x_{2}^{k+1}-x_{2}^{k})^{\top}\left[g_{2}(x_{2}^{k+1})-g_{2}(x_{2}^{k})\right] \nonumber \\
& & + (x_{3}^{k+1}-x_{3}^{k})^{\top}\left[g_{3}(x_{3}^{k+1})-g_{3}(x_{3}^{k})\right] \nonumber \\
& \leq & \frac{\gamma}{2}\left[\| A_{1}x_{1}^{k}+A_{2}x_{2}^{k}+A_{3}x_{3}^{k}-b\|^{2} - \| A_{1}x_{1}^{k+1}+A_{2}x_{2}^{k}+A_{3}x_{3}^{k}-b\|^{2}+2\| A_{2}(x_{2}^{k-1}-x_{2}^{k})\|^{2} \right. \nonumber \\
& & +2\| A_{3}(x_{3}^{k-1}-x_{3}^{k})\|^{2}+\| A_{1}x_{1}^{k+1}+A_{2}x_{2}^{k}+A_{3}x_{3}^{k}-b\|^{2}-\| A_{1}x_{1}^{k+1}+A_{2}x_{2}^{k+1}+A_{3}x_{3}^{k}-b\|^{2} \nonumber \\
& & +\| A_{3}x_{3}^{k-1}-A_{3}x_{3}^{k}\|^{2}+ \| A_{1}x_{1}^{k+1}+A_{2}x_{2}^{k+1}+A_{3}x_{3}^{k}-b\|^{2} \nonumber \\
& & \left. -\| A_{1}x_{1}^{k+1}+A_{2}x_{2}^{k+1}+A_{3}x_{3}^{k+1}-b\|^{2}-\| A_{3}x_{3}^{k}-A_{3}x_{3}^{k+1}\|^{2} \right] \nonumber \\
& = & \frac{\gamma}{2}\left[\| A_{1}x_{1}^{k}+A_{2}x_{2}^{k}+A_{3}x_{3}^{k}-b\|^{2}+2\| A_{2}(x_{2}^{k-1}-x_{2}^{k})\|^{2}+3\| A_{3}(x_{3}^{k-1}-x_{3}^{k})\|^{2} \right. \nonumber \\
& & \left. -\| A_{3}x_{3}^{k}-A_{3}x_{3}^{k+1}\|^{2}-\| A_{1}x_{1}^{k+1}+A_{2}x_{2}^{k+1}+A_{3}x_{3}^{k+1}-b\|^{2}\right] \nonumber \\
& = & \frac{\gamma}{2} \left[R_{k}-R_{k+1}+2\| A_{2}x_{2}^{k+1}-A_{2}x_{2}^{k}\|^{2}+2\| A_{3}x_{3}^{k}-A_{3}x_{3}^{k+1}\|^{2}\right]. \label{add-2-3-4}
\end{eqnarray}
Note that \eqref{f2-strong-2} and \eqref{f3-strong-2} imply that
\begin{eqnarray}\label{f2-f3-strong}
\begin{aligned}
& (x_{2}^{k+1}-x_{2}^{k})^{\top}\left[g_{2}(x_{2}^{k+1})-g_{2}(x_{2}^{k})\right] \geq \sigma_2 \| x_{2}^{k+1}-x_{2}^{k}\|^{2},  \\
& (x_{3}^{k+1}-x_{3}^{k})^{\top}\left[g_{3}(x_{3}^{k+1})-g_{3}(x_{3}^{k})\right] \geq \sigma_3 \| x_{3}^{k+1}-x_{3}^{k}\|^{2}.
\end{aligned}
\end{eqnarray}
Combining \eqref{add-2-3-4} and \eqref{f2-f3-strong}, and the fact that $\gamma\leq\min\left\{\frac{\sigma_2}{\lambda_{\max}(A_{2}^{\top}A_{2})}, \frac{\sigma_3}{\lambda_{\max}(A_{3}^{\top}A_{3})}\right\}$, it is easy to see that $R_{k+1} \leq R_k$ for $k=0,1,2,...$.
\end{proof}

We are now ready to present the $o(1/k)$ non-ergodic convergence rate of the ADMM \eqref{eq::update_x}-\eqref{eq::update_lambda}.

\begin{theorem}
Assume $\gamma\leq\min\left\{\frac{\sigma_2}{2\lambda_{\max}(A_{2}^{\top}A_{2})}, \frac{\sigma_3}{2\lambda_{\max}(A_{3}^{\top}A_{3})}\right\}$. Let the sequence $\{x_1^{k},x_2^k,x_3^k,\lambda^k\}$ be generated by ADMM \eqref{eq::update_x}-\eqref{eq::update_lambda}. Then $\sum_{k=1}^\infty R_k < +\infty$
and $R_k = o(1/k)$.
\end{theorem}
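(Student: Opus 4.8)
The plan is to combine two facts and then invoke an elementary observation. The two facts are: (i) $R_k$ is non-increasing, which is exactly Lemma \ref{lemma5} (whose hypothesis $\gamma\le\min\{\sigma_2/\lambda_{\max}(A_2^\top A_2),\sigma_3/\lambda_{\max}(A_3^\top A_3)\}$ is implied by the tighter bound assumed here), and (ii) $\sum_k R_k<\infty$. Once both are in hand the claimed rate follows because a nonnegative non-increasing summable sequence is automatically $o(1/k)$: since $R_j$ is non-increasing, $\sum_{j=\lfloor k/2\rfloor+1}^{k}R_j\ge\lceil k/2\rceil R_k\ge\frac{k}{2}R_k$, while the left-hand side is a tail of a convergent series and therefore tends to $0$; hence $kR_k\to 0$. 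The whole difficulty is thus concentrated in the summability step, and I would obtain it by bounding the three groups of terms in $R_{k+1}$ separately.

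First I would control the two difference terms. The estimate \eqref{add-2-3-4} from the proof of Lemma \ref{lemma5}, together with the convexity of $f_1$ (which makes its cross term nonnegative) and the strong-convexity inequalities \eqref{f2-f3-strong}, gives
\begin{eqnarray*}
\sigma_2\|x_2^{k+1}-x_2^k\|^2+\sigma_3\|x_3^{k+1}-x_3^k\|^2 \leq \frac{\gamma}{2}(R_k-R_{k+1})+\gamma\|A_2(x_2^{k+1}-x_2^k)\|^2+\gamma\|A_3(x_3^{k+1}-x_3^k)\|^2.
\end{eqnarray*}
Bounding $\|A_i(\cdot)\|^2\le\lambda_{\max}(A_i^\top A_i)\|\cdot\|^2$ and invoking $\gamma\le\min\{\sigma_2/(2\lambda_{\max}(A_2^\top A_2)),\sigma_3/(2\lambda_{\max}(A_3^\top A_3))\}$ leaves the coefficient of each squared difference bounded below by $\sigma_i/2>0$, so that
\begin{eqnarray*}
\frac{\sigma_2}{2}\|x_2^{k+1}-x_2^k\|^2+\frac{\sigma_3}{2}\|x_3^{k+1}-x_3^k\|^2 \leq \frac{\gamma}{2}(R_k-R_{k+1}).
\end{eqnarray*}
Summing over $k$ telescopes the right-hand side, proving $\sum_k\|x_2^{k+1}-x_2^k\|^2<\infty$ and $\sum_k\|x_3^{k+1}-x_3^k\|^2<\infty$; consequently $\sum_k\|A_2(x_2^{k+1}-x_2^k)\|^2$ and $\sum_k\|A_3(x_3^{k+1}-x_3^k)\|^2$ are finite, disposing of the last two terms of $R_{k+1}$.

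The remaining term is the constraint residual $\|A_1x_1^{k+1}+A_2x_2^{k+1}+A_3x_3^{k+1}-b\|^2$, and here I would build a Lyapunov function out of Lemma \ref{lemma1}. Specializing \eqref{1} to $u=u^*$ and $\lambda=\lambda^*$, the inner-product term collapses (using $A_1x_1^*+A_2x_2^*+A_3x_3^*=b$) to $(\lambda^*)^\top(A_1x_1^{k+1}+A_2x_2^{k+1}+A_3x_3^{k+1}-b)$, while the saddle-point property obtained from \eqref{kkt} and convexity gives $f(u^{k+1})-f(u^*)\ge(\lambda^*)^\top(A_1x_1^{k+1}+A_2x_2^{k+1}+A_3x_3^{k+1}-b)$. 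Hence the objective-plus-multiplier part of \eqref{1} is nonpositive, and defining
\begin{eqnarray*}
\Phi_k:=\frac{1}{2\gamma}\|\lambda^k-\lambda^*\|^2+\frac{\gamma}{2}\|A_3(x_3^k-x_3^*)\|^2+\frac{\gamma}{2}\|A_2(x_2^k-x_2^*)+A_3(x_3^k-x_3^*)\|^2\ge 0
\end{eqnarray*}
turns \eqref{1} into the descent inequality $\Phi_k-\Phi_{k+1}\ge\frac{\gamma}{2}\|A_1x_1^{k+1}+A_2x_2^k+A_3x_3^k-b\|^2$. Telescoping gives $\sum_k\|A_1x_1^{k+1}+A_2x_2^k+A_3x_3^k-b\|^2<\infty$, and since
\begin{eqnarray*}
A_1x_1^{k+1}+A_2x_2^{k+1}+A_3x_3^{k+1}-b=(A_1x_1^{k+1}+A_2x_2^k+A_3x_3^k-b)+A_2(x_2^{k+1}-x_2^k)+A_3(x_3^{k+1}-x_3^k),
\end{eqnarray*}
the inequality $\|a+b+c\|^2\le 3(\|a\|^2+\|b\|^2+\|c\|^2)$ together with the summability already established yields $\sum_k\|A_1x_1^{k+1}+A_2x_2^{k+1}+A_3x_3^{k+1}-b\|^2<\infty$. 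Adding the three estimates gives $\sum_k R_k<\infty$.

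The main obstacle is precisely this last group of terms: unlike the difference terms, the constraint residual is not telescoped by the monotonicity argument of Lemma \ref{lemma5}, so one must identify the correct Lyapunov function $\Phi_k$ and use the saddle-point inequality to force the objective contribution in \eqref{1} to carry the right (nonpositive) sign. Once $\sum_k R_k<\infty$ is secured, the monotonicity of $R_k$ from Lemma \ref{lemma5} (applicable because the present bound on $\gamma$ is the tighter of the two) and the elementary tail argument stated at the outset deliver $R_k=o(1/k)$.
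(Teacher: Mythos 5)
Your proof is correct, and its engine coincides with the paper's: your descent inequality $\Phi_k-\Phi_{k+1}\ge\frac{\gamma}{2}\|A_1x_1^{k+1}+A_2x_2^k+A_3x_3^k-b\|^2$ is exactly the paper's \eqref{redidual-bounded} (up to the factor $2/\gamma$), obtained the same way by specializing \eqref{1} to $u=u^*$, $\lambda=\lambda^*$ and invoking the saddle-point inequality \eqref{thm_4}, and your concluding tail argument is the same monotonicity-plus-summability step the paper takes via Lemma 1.2 of \cite{Deng-admm-2014}. Where you genuinely diverge is in the bookkeeping for the remaining pieces of $R_{k+1}$. The paper never separately establishes summability of the iterate differences: from \eqref{3} and \eqref{4} alone (without \eqref{2}), together with \eqref{f2-f3-strong} and the bound on $\gamma$, it derives the one-pass estimate \eqref{Rk-bounded}, namely $R_{k+1}\le\|A_1x_1^{k+1}+A_2x_2^k+A_3x_3^k-b\|^2+\|A_3(x_3^{k-1}-x_3^k)\|^2-\|A_3(x_3^k-x_3^{k+1})\|^2$, whose sum telescopes directly against the summable half-step residual. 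You instead telescope \eqref{add-2-3-4} against $R_k-R_{k+1}$ --- correctly dropping the nonnegative $f_1$ cross term by monotonicity of $\partial f_1$ and absorbing $\gamma\|A_i(x_i^{k+1}-x_i^k)\|^2\le\frac{\sigma_i}{2}\|x_i^{k+1}-x_i^k\|^2$ via the assumed bound on $\gamma$ --- to conclude $\sum_k\|x_i^{k+1}-x_i^k\|^2<\infty$ for $i=2,3$, and then reassemble the full residual through $\|a+b+c\|^2\le 3(\|a\|^2+\|b\|^2+\|c\|^2)$. Both routes are valid (and your observation that the theorem's tighter bound on $\gamma$ implies the hypothesis of Lemma \ref{lemma5} is right, so monotonicity is available). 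Your version is more modular and yields the summability of the successive differences $\|x_2^{k+1}-x_2^k\|^2$, $\|x_3^{k+1}-x_3^k\|^2$ as a by-product of independent interest; the paper's version is leaner, avoiding the factor-of-three reassembly and not needing the $f_1$-block estimate \eqref{2} in the theorem's proof at all.
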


\begin{proof}
Combining \eqref{3} and \eqref{4} yields
\begin{eqnarray*}
& & (x_{2}^{k+1}-x_{2}^{k})^{\top}\left[g_{2}(x_{2}^{k+1})-g_{2}(x_{2}^{k})\right] + (x_{3}^{k+1}-x_{3}^{k})^{\top}\left[g_{3}(x_{3}^{k+1})-g_{3}(x_{3}^{k})\right] \\
& \leq & \frac{\gamma}{2}\left[\| A_{1}x_{1}^{k+1}+A_{2}x_{2}^{k}+A_{3}x_{3}^{k}-b\|^{2}-\| A_{1}x_{1}^{k+1}+A_{2}x_{2}^{k+1}+A_{3}x_{3}^{k}-b\|^{2} \right. \\
& & +\| A_{3}x_{3}^{k-1}-A_{3}x_{3}^{k}\|^{2}+ \| A_{1}x_{1}^{k+1}+A_{2}x_{2}^{k+1}+A_{3}x_{3}^{k}-b\|^{2}\\
& & \left. -\| A_{1}x_{1}^{k+1}+A_{2}x_{2}^{k+1}+A_{3}x_{3}^{k+1}-b\|^{2}-\| A_{3}x_{3}^{k}-A_{3}x_{3}^{k+1}\|^{2}\right]\\
& = & \frac{\gamma}{2}\left[ \| A_{1}x_{1}^{k+1}+A_{2}x_{2}^{k}+A_{3}x_{3}^{k}-b\|^{2}+\| A_{3}x_{3}^{k-1}-A_{3}x_{3}^{k}\|^{2}-\| A_{3}x_{3}^{k}-A_{3}x_{3}^{k+1}\|^{2} \right. \\
& & \left. -\| A_{1}x_{1}^{k+1}+A_{2}x_{2}^{k+1}+A_{3}x_{3}^{k+1}-b\|^{2} \right] \\
& = & \frac{\gamma}{2}\left[\|A_{1}x_{1}^{k+1}+A_{2}x_{2}^{k}+A_{3}x_{3}^{k}-b\|^2 +\| A_{3}x_{3}^{k-1}-A_{3}x_{3}^{k}\|^{2}-\| A_{3}x_{3}^{k}-A_{3}x_{3}^{k+1}\|^{2} - R_{k+1} \right. \\
& & \left. + 2\|A_2x_2^{k+1}-A_2x_2^k\|^2 + 3\|A_3x_3^k-A_3x_3^{k+1}\|^2 \right] \\
& \leq & \frac{\gamma}{2}\left[\|A_{1}x_{1}^{k+1}+A_{2}x_{2}^{k}+A_{3}x_{3}^{k}-b\|^2 +\| A_{3}x_{3}^{k-1}-A_{3}x_{3}^{k}\|^{2}-\| A_{3}x_{3}^{k}-A_{3}x_{3}^{k+1}\|^{2} - R_{k+1}\right] \\
& & + \gamma\lambda_{\max}(A_2^\top A_2)\| x_{2}^{k+1}-x_{2}^{k}\|^{2} + \frac{3\gamma}{2}\lambda_{\max}(A_3^\top A_3)\| x_{3}^{k+1}-x_{3}^{k}\|^{2}.
\end{eqnarray*}
Using \eqref{f2-f3-strong}
and the assumption that $\gamma\leq\min\left\{\frac{\sigma_2}{2\lambda_{\max}(A_{2}^{\top}A_{2})}, \frac{\sigma_3}{2\lambda_{\max}(A_{3}^{\top}A_{3})}\right\}$, we obtain
\begin{equation}\label{Rk-bounded}
R_{k+1} \leq \| A_{1}x_{1}^{k+1}+A_{2}x_{2}^{k}+A_{3}x_{3}^{k}-b\|^{2}+\| A_{3}x_{3}^{k-1}-A_{3}x_{3}^{k}\|^{2}-\| A_{3}x_{3}^{k}-A_{3}x_{3}^{k+1}\|^{2}.
\end{equation}

From the optimality conditions \eqref{kkt} and the convexity of $f$, it follows that
\begin{equation}\label{thm_4}
f(u^{*})-f(u^{k+1})\leq(x_{1}^{*}-x_{1}^{k+1})^{\top}(A_{1}^{\top}\lambda^{*})+(x_{2}^{*}-x_{2}^{k+1})^{\top}(A_{2}^{\top}\lambda^{*})
+(x_{3}^{*}-x_{3}^{k+1})^{\top}(A_{3}^{\top}\lambda^{*}).
\end{equation}
By combining \eqref{1} and \eqref{thm_4}, we have
\begin{eqnarray}
& & \left(\begin{array}{c} x_{1}^{*}-x_{1}^{k+1} \\ x_{2}^{*}-x_{2}^{k+1} \\ x_{3}^{*}-x_{3}^{k+1} \end{array} \right)^{\top}
\left(\begin{array}{c} A_{1}^{\top}(\lambda^{*}-\lambda^{k+1}) \\ A_{2}^{\top}(\lambda^{*}-\lambda^{k+1}) \\ A_{3}^{\top}(\lambda^{*}-\lambda^{k+1}) \end{array} \right) \nonumber \\
& & + \frac{1}{2\gamma}\|\lambda^{k}-\lambda^{k+1}\|^{2}+\frac{\gamma}{2}\left(\| A_{1}x_{1}^{*}+A_{2}x_{2}^{*}+A_{3}x_{3}^{k}-b\|^{2}-\| A_{1}x_{1}^{*}+A_{2}x_{2}^{*}+A_{3}x_{3}^{k+1}-b\|^{2}\right) \nonumber \\
& & +\frac{\gamma}{2}\left(\| A_{1}x_{1}^{*}+A_{2}x_{2}^{k}+A_{3}x_{3}^{k}-b\|^{2}-\| A_{1}x_{1}^{*}+A_{2}x_{2}^{k+1}+A_{3}x_{3}^{k+1}-b\|^{2}\right) \nonumber \\
& \geq & \frac{\gamma}{2}\| A_{1}x_{1}^{k+1}+A_{2}x_{2}^{k}+A_{3}x_{3}^{k}-b\|^{2}. \label{thm_5}
\end{eqnarray}
Note that the first term in \eqref{thm_5} is equal to
\begin{eqnarray*}
& &  -(A_{1}x_{1}^{k+1}+A_{2}x_{2}^{k+1}+A_{3}x_{3}^{k+1}-b)^{\top}(\lambda^{*}-\lambda^{k+1}) \\
& = & \frac{1}{\gamma}\left(\lambda^{k+1}-\lambda^{k}\right)^{\top}\left(\lambda^{*}-\lambda^{k+1}\right) \\
& = & \frac{1}{2\gamma}\left(\|\lambda^{*}-\lambda^{k}\|^{2}
-\|\lambda^{k+1}-\lambda^{k}\|^{2}-\|\lambda^{*}-\lambda^{k+1}\|^{2}\right).
\end{eqnarray*}
Therefore, \eqref{thm_5} can be rearranged as
\begin{eqnarray}
& & \frac{1}{\gamma^{2}}\left(\|\lambda^{*}-\lambda^{k}\|^{2}-\|\lambda^{*}-\lambda^{k+1}\|^{2}\right)+(\| A_{1}x_{1}^{*}+A_{2}x_{2}^{*}+A_{3}x_{3}^{k}-b\|^{2}-\| A_{1}x_{1}^{*}+A_{2}x_{2}^{*}+A_{3}x_{3}^{k+1}-b\|^{2}) \nonumber \\
& & +\left(\| A_{1}x_{1}^{*}+A_{2}x_{2}^{k}+A_{3}x_{3}^{k}-b\|^{2}-\| A_{1}x_{1}^{*}+A_{2}x_{2}^{k+1}+A_{3}x_{3}^{k+1}-b\|^{2}\right) \nonumber \\
& \geq & \| A_{1}x_{1}^{k+1}+A_{2}x_{2}^{k}+A_{3}x_{3}^{k}-b\|^{2}. \label{redidual-bounded}
\end{eqnarray}
By \eqref{Rk-bounded} and \eqref{redidual-bounded} we get that
\begin{eqnarray*}
& & \sum\limits_{k=1}^{\infty}R_{k+1} \\
&\leq & \sum\limits_{k=1}^{\infty}\left[\| A_{1}x_{1}^{k+1}+A_{2}x_{2}^{k}+A_{3}x_{3}^{k}-b\|^{2}+\| A_{3}x_{3}^{k-1}-A_{3}x_{3}^{k}\|^{2}-\| A_{3}x_{3}^{k}-A_{3}x_{3}^{k+1}\|^{2}\right] \\
&\leq & \sum\limits_{k=1}^{\infty} \| A_{1}x_{1}^{k+1}+A_{2}x_{2}^{k}+A_{3}x_{3}^{k}-b\|^{2} + \| A_{3}x_{3}^{0}-A_{3}x_{3}^{1}\|^{2} \\
&\leq & \| A_{3}x_{3}^{0}-A_{3}x_{3}^{1}\|^{2}+\sum\limits_{k=1}^{\infty} \left[ \left(\| A_{1}x_{1}^{*}+A_{2}x_{2}^{*}+A_{3}x_{3}^{k}-b\|^{2}-\| A_{1}x_{1}^{*}+A_{2}x_{2}^{*}+A_{3}x_{3}^{k+1}-b\|^{2}\right) \right. \\
& & +\left(\| A_{1}x_{1}^{*}+A_{2}x_{2}^{k}+A_{3}x_{3}^{k}-b\|^{2}-\| A_{1}x_{1}^{*}+A_{2}x_{2}^{k+1}+A_{3}x_{3}^{k+1}-b\|^{2}\right) \\
& & \left. + \frac{1}{\gamma^{2}}\left(\|\lambda^{*}-\lambda^{k}\|^{2}-\|\lambda^{*}-\lambda^{k+1}\|^{2}\right) \right] \\
& \leq & \| A_{3}x_{3}^{0}-A_{3}x_{3}^{1}\|^{2}+\| A_{1}x_{1}^{*}+A_{2}x_{2}^{*}+A_{3}x_{3}^{1}-b\|^{2}+\| A_{1}x_{1}^{*}+A_{2}x_{2}^{1}+A_{3}x_{3}^{1}-b\|^{2}+\frac{1}{\gamma^{2}}\|\lambda^{*}-\lambda^{1}\|^{2}.
\end{eqnarray*}
Note that we have proved that $R_k$ is monotonically non-increasing, and $\sum_{k=1}^\infty R_k<+\infty$. As observed in Lemma 1.2 of \cite{Deng-admm-2014}, one has
\[
k R_{2k} \le R_k + R_{k+1} + \cdots + R_{2k} \rightarrow 0, \mbox{ as } k \rightarrow \infty,
\]
and therefore $R_k = o(1/k)$.
\end{proof}

\begin{remark}
\blue{We remark here that using similar arguments, it is easy to see that \eqref{Rk-bounded} and \eqref{redidual-bounded} together with the monotonicity of $R_k$ also imply that $R_k$ has a non-asymptotic sublinear convergence rate $O(1/k)$.}
\end{remark}

Note that our analysis can be extended to $N$-block ADMM \eqref{admm-N} easily. The results are summarized in the following theorem and the proof is omitted for the sake of succinctness.
\begin{theorem}
Assume that
\[
\gamma\leq\min_{i=2,\cdots,N-1}\left\{\frac{2\sigma_{i}}{(2N-i)(i-1)\lambda_{\max}(A_{i}^{\top}A_{i})}, \frac{2\sigma_{N}}{(N-2)(N+1)\lambda_{\max}(A_{N}^{\top}A_{N})}\right\}.
\]
Let $(x_{1}^{k+1},x_{2}^{k+1},x_{3}^{k+1},\cdots,x_{N}^{k+1},\lambda^{k+1})\in\Omega$ be generated by ADMM \eqref{admm-N}. Then $\sum_{k=1}^\infty R_k < +\infty$ and $R_k=o(1/k)$,
where $R_k$ is defined as
\[R_{k+1} := \left\|\sum_{i=1}^N A_ix_i^{k+1}-b\right\|^2 + \sum_{i=2}^N\frac{(2N-i)(i-1)}{2}\|A_ix_i^k-A_ix_i^{k+1}\|^2. \]
\end{theorem}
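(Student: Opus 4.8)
The plan is to reproduce, for general $N$, the three-part structure of the $N=3$ argument: first show that $R_k$ is non-increasing, then show $\sum_k R_k<+\infty$, and finally invoke Lemma~1.2 of \cite{Deng-admm-2014} to pass from summability to the rate $R_k=o(1/k)$. The starting point is the $N$-block analogue of the optimality conditions \eqref{opt-x1-lambda}--\eqref{opt-x3-lambda}: for each $i$,
\[
(x_i-x_i^{k+1})^{\top}\Big[g_i(x_i^{k+1})-A_i^{\top}\lambda^{k+1}+\gamma A_i^{\top}\!\!\sum_{j>i}A_j(x_j^k-x_j^{k+1})\Big]\ge 0,\qquad\forall\,x_i\in\XCal_i,
\]
where the tail sum is empty for $i=N$. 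Exactly as in the derivation of \eqref{2}, \eqref{3} and \eqref{4}, for every block $i$ I would set $x_i=x_i^k$ in the step-$(k+1)$ condition and $x_i=x_i^{k+1}$ in the (shifted) step-$k$ condition, add them, and apply the identity \eqref{identity-4}. The only genuinely new feature is that the cross term now carries the entire tail $\sum_{j>i}A_j(x_j^{k-1}-x_j^k)$; bounding it by Young's inequality and expanding the squared norm of this sum of $N-i$ vectors by Cauchy--Schwarz produces the weight $N-i$ on $\sum_{j>i}\|A_j(x_j^{k-1}-x_j^k)\|^2$. Crucially, the $\tfrac{\gamma}{2}\|A_i(x_i^{k+1}-x_i^k)\|^2$ term created by Young cancels the $-\tfrac{\gamma}{2}\|A_i(\cdot)\|^2$ generated by \eqref{identity-4} whenever $i<N$; for $i=N$ there is no cross term, so this negative square survives, and this single asymmetry is what separates the last block from the rest.

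Summing the $N$ per-block inequalities and telescoping the partial residuals $A_1x_1^{k+1}+\cdots+A_ix_i^{k+1}+A_{i+1}x_{i+1}^k+\cdots+A_Nx_N^k-b$ collapses them to $\|\sum_jA_jx_j^k-b\|^2-\|\sum_jA_jx_j^{k+1}-b\|^2$. The key bookkeeping step is to interchange the order of summation in $\sum_{i=1}^{N-1}(N-i)\sum_{j>i}\|A_j(x_j^{k-1}-x_j^k)\|^2$: the coefficient of $\|A_j(x_j^{k-1}-x_j^k)\|^2$ becomes $\sum_{i=1}^{j-1}(N-i)=\tfrac{(2N-j)(j-1)}{2}=:c_j$, precisely the weight attached to block $j$ in $R_k$. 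Rewriting the telescoped expression through $R_k$ and $R_{k+1}$ and invoking the strong convexity \eqref{f2-f3-strong} of $f_2,\dots,f_N$ (while $f_1$ contributes only a nonnegative term), one arrives at
\[
\tfrac{\gamma}{2}(R_{k+1}-R_k)\le\sum_{j=2}^{N-1}\Big(\tfrac{\gamma}{2}c_j\lambda_{\max}(A_j^{\top}A_j)-\sigma_j\Big)\|x_j^{k+1}-x_j^k\|^2+\Big(\tfrac{\gamma}{2}(c_N-1)\lambda_{\max}(A_N^{\top}A_N)-\sigma_N\Big)\|x_N^{k+1}-x_N^k\|^2 ,
\]
where the reduction $c_N\mapsto c_N-1=\tfrac{(N-2)(N+1)}{2}$ for the last block comes from the surviving negative square. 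The stated bound on $\gamma$ is exactly the requirement that every coefficient above be $\le-\sigma_j/2<0$, which is why it takes the two separate displayed forms; consequently $R_{k+1}\le R_k-\tfrac{1}{\gamma}\sum_{j=2}^N\sigma_j\|x_j^{k+1}-x_j^k\|^2$. This single inequality yields both monotonicity $R_{k+1}\le R_k$ and, upon telescoping, $\sum_k\sum_{j=2}^N\sigma_j\|x_j^{k+1}-x_j^k\|^2\le\gamma R_1<+\infty$, so that $\sum_k\|x_j^{k+1}-x_j^k\|^2<+\infty$ for each $j\ge2$.

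It remains to control the residual part of $R_{k+1}$. Here I would invoke the $N$-block version of Lemma~\ref{lemma1} together with the optimality system \eqref{kkt}, exactly as \eqref{1} and \eqref{thm_4} combine into \eqref{redidual-bounded} for $N=3$; summing the resulting inequality over $k$ telescopes its right-hand side (differences of $\|\lambda^*-\lambda^k\|^2$ and of partial residuals relative to the optimum) and gives $\sum_k\|A_1x_1^{k+1}+\sum_{j\ge2}A_jx_j^k-b\|^2<+\infty$. Writing $\sum_jA_jx_j^{k+1}-b=(A_1x_1^{k+1}+\sum_{j\ge2}A_jx_j^k-b)+\sum_{j\ge2}A_j(x_j^{k+1}-x_j^k)$ and using the summability of the block increments just established, one concludes $\sum_k\|\sum_jA_jx_j^{k+1}-b\|^2<+\infty$. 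Since $R_{k+1}=\|\sum_jA_jx_j^{k+1}-b\|^2+\sum_{j=2}^Nc_j\|A_j(x_j^{k+1}-x_j^k)\|^2$ and both pieces are now summable, $\sum_kR_k<+\infty$. Combining this with the monotonicity of $R_k$, Lemma~1.2 of \cite{Deng-admm-2014} gives $kR_{2k}\le R_k+\cdots+R_{2k}\to0$, hence $R_k=o(1/k)$.

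The step I expect to be the main obstacle is the combinatorial accounting behind the generalized \eqref{2}--\eqref{4}: one must verify that, after the Young/Cauchy--Schwarz estimates and the interchange of summation order, the weights collapse to exactly $\tfrac{(2N-j)(j-1)}{2}$, and that the lone uncancelled square $-\tfrac{\gamma}{2}\|A_N(x_N^{k+1}-x_N^k)\|^2$ is correctly charged to the last block so that its effective weight is $\tfrac{(N-2)(N+1)}{2}$ rather than $\tfrac{N(N-1)}{2}$. A secondary subtlety, invisible when $N=3$, is that the neat telescoping of \eqref{Rk-bounded} in the single block $A_N$ does \emph{not} extend: for $N>3$ the combination of blocks $2,\dots,N$ leaves several non-telescoping tail-difference terms, and it is precisely the strict per-block slack $\sigma_j/2$ in the decrease inequality above—rather than any telescoping—that renders them summable. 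Pinning down these index ranges and slacks is where the bulk of the (routine but delicate) work lies.
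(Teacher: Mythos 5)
Your proposal is correct, and it follows the same three-part template the paper itself uses for $N=3$ (monotonicity of $R_k$, summability, then Lemma~1.2 of \cite{Deng-admm-2014}); note the paper omits the general-$N$ proof entirely, asserting only that the $N=3$ analysis "can be extended easily." Your combinatorial bookkeeping checks out: $\sum_{i=1}^{j-1}(N-i)=\tfrac{(j-1)(2N-j)}{2}$ recovers the weights $c_j$ in $R_k$, the Young term $\tfrac{\gamma}{2}\|A_i(x_i^{k+1}-x_i^k)\|^2$ cancels the negative square from \eqref{identity-4} for every $i<N$ exactly as in \eqref{2} and \eqref{3}, the lone surviving square from the last block (as in \eqref{4}) yields the effective weight $c_N-1=\tfrac{(N-2)(N+1)}{2}$, and the stated bound on $\gamma$ gives each coefficient the slack $-\sigma_j/2$. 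The one place you genuinely depart from a literal extension of the $N=3$ proof is the summability step: the paper obtains $\sum_k R_k<+\infty$ by combining only blocks $2$ and $3$ into \eqref{Rk-bounded}, whose residual term telescopes cleanly in the single block $A_3$ and is then controlled by \eqref{redidual-bounded}; you correctly observe that this telescoping breaks for $N>3$ (the weights $\sum_{i=2}^{j-1}(N-i)$ from blocks $2,\dots,N$ no longer match the $c_j$), and you repair it by instead extracting $\sum_k\sum_{j\ge 2}\sigma_j\|x_j^{k+1}-x_j^k\|^2<+\infty$ from the strict decrease inequality, importing $\sum_k\|A_1x_1^{k+1}+\sum_{j\ge2}A_jx_j^k-b\|^2<+\infty$ from the $N$-block analogue of \eqref{1} combined with \eqref{kkt} (the analogue of \eqref{redidual-bounded}), and reassembling $R_{k+1}$ from its summable pieces. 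This route is arguably cleaner than the $N=3$ shortcut and makes transparent why the factor-of-two slack in the $\gamma$ bound (versus the weaker bound sufficing for mere monotonicity, cf.\ Lemma \ref{lemma5}) is needed: it is what renders the non-telescoping increment terms summable. I see no gap.
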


\section{Conclusions}\label{sec:conclusion}

In this paper, we analyzed the sublinear convergence rate of the standard Gauss-Seidel multi-block ADMM in both ergodic and non-ergodic sense. These are the first sublinear convergence rate results for standard multi-block ADMM. Using the techniques developed in this paper, we can also analyze the convergence rate of some variants of the standard multi-block ADMM such as the ones studied in \cite{He-Hou-Yuan-Jacob-2013} and \cite{Deng-admm-2014}, where the primal variables are updated in a Jacobi manner;
we plan to pursue this direction of research in the future.

{
We remark here the techniques developed in this paper can lead to a very simple proof for the $O(1/t)$ complexity of two-block ADMM in terms of objective error and constraint violation of \eqref{prob:N} ($N=2$). Specifically, when $N=2$, denote $(x_1^k,x_2^k;\lambda^k)$ as the iterate generated by the two-block ADMM \eqref{admm-N}, and define
\[\bar{x}_1^t = \frac{1}{t+1}\sum_{k=0}^t x_1^{k+1}, \quad \bar{x}_2^t = \frac{1}{t+1}\sum_{k=0}^t x_2^{k+1}, \quad \bar{\lambda}^t = \frac{1}{t+1}\sum_{k=0}^t \lambda^{k+1}.\]
We can prove that
\be\label{2-admm-eps} |f_1(\bar{x}_1^t) + f_2(\bar{x}_2^t) - f_1(x_1^*) - f_2(x_2^*) | = O(1/t), \quad \mbox{ and } \quad \|A_1\bar{x}_1^t+A_2\bar{x}_2^t-b\|=O(1/t),\ee
i.e., the convergence rate of the two-block ADMM is $O(1/t)$ in terms of both objective error and constraint violation.
Note that for $N=2$, $\gamma$ can be any positive number and there is no need to impose the strong convexity on either $f_1$ or $f_2$. The proof of this result is as follows.

First, when $N=2$, the optimality conditions \eqref{opt-x1-lambda}-\eqref{opt-x3-lambda} reduce to
\begin{align}
& (x_{1}-x_{1}^{k+1})^{\top}\left[ g_{1}(x_{1}^{k+1})-A_{1}^{\top}\lambda^{k+1}
+\gamma A_{1}^{\top}A_{2}(x_{2}^{k}-x_{2}^{k+1}) \right] \geq 0, & \forall x_{1}\in\XCal_{1}, \label{2-admm-opt-x1-lambda} \\
& (x_{2}-x_{2}^{k+1})^{\top}\left[ g_{2}(x_{2}^{k+1})-A_{2}^{\top}\lambda^{k+1}\right] \geq 0, & \forall x_{2}\in\XCal_{2}. \label{2-admm-opt-x2-lambda}
\end{align}
Therefore, by letting $x_1=x_1^*$ in \eqref{2-admm-opt-x1-lambda}, $x_2=x_2^*$ in \eqref{2-admm-opt-x2-lambda}, and using the convexity of $f_1$ and $f_2$, we have
\[
\ba{ll}
 & f_1(x_1^{k+1}) - f_1(x_1^*) + f_2(x_2^{k+1}) - f_2(x_2^*) \\
\leq & g_1(x_1^{k+1})^\top(x_1^{k+1}-x_1^*) + g_2(x_2^{k+1})^\top(x_2^{k+1}-x_2^*) \\
\leq & (-A_1^\top\lambda^{k+1}+\gamma A_1^\top A_2(x_2^k-x_2^{k+1}))^\top (x_1^*-x_1^{k+1}) + (-A_2^\top\lambda^{k+1})^\top (x_2^*-x_2^{k+1}) \\
= & \frac{1}{\gamma}(\lambda^k-\lambda^{k+1})^\top\lambda^{k+1} + \gamma\left[(-A_2x_2^{k+1})-(-A_2x_2^k)\right]^\top\left[(A_1x_1^*-b)-(A_1x_1^{k+1}-b)\right] \\
= & \frac{1}{\gamma}(\lambda^k-\lambda^{k+1})^\top\lambda^{k+1} + \frac{\gamma}{2}\left(\|-A_1x_1^{k+1}+b-A_2x_2^{k+1}\|^2+\|A_1x_1^*-b+A_2x_2^k\|^2-\|-A_2x_2^{k+1}-A_1x_1^*+b\|^2\right.\\
 & \left.-\|A_1x_1^{k+1}-b+A_2x_2^k\|^2\right) \\
\leq & \frac{1}{\gamma}(\lambda^k-\lambda^{k+1})^\top\lambda^{k+1} + \frac{\gamma}{2}\left(\frac{1}{\gamma^2}\|\lambda^k-\lambda^{k+1}\|^2+\|A_1x_1^*-b+A_2x_2^k\|^2-\|-A_2x_2^{k+1}-A_1x_1^*+b\|^2\right),
\ea\]
where the second equality is due to \eqref{identity-4}. Thus for any $\lambda\in\br^p$, it holds that,
\be\label{2-admm-ineq-1}
\ba{ll}
 & f_1(x_1^{k+1}) - f_1(x_1^*) + f_2(x_2^{k+1}) - f_2(x_2^*) -\lambda^\top(A_1x_1^{k+1}+A_2x_2^{k+1}-b) \\
\leq & \frac{1}{\gamma}(\lambda^{k+1}-\lambda)^\top(\lambda^k-\lambda^{k+1}) + \frac{1}{2\gamma}\|\lambda^k-\lambda^{k+1}\|^2 + \frac{\gamma}{2}(\|A_1x_1^*+A_2x_2^k-b\|^2-\|A_1x_1^*+A_2x_2^{k+1}-b\|^2)\\
= & \frac{1}{2\gamma} (\|\lambda-\lambda^{k}\|^2 - \|\lambda-\lambda^{k+1}\|^2) + \frac{\gamma}{2}(\|A_1x_1^*+A_2x_2^k-b\|^2-\|A_1x_1^*+A_2x_2^{k+1}-b\|^2).
\ea
\ee
Summing \eqref{2-admm-ineq-1} over $k=0,1,\ldots,t$ yields,
\[\ba{ll}
& f_1(\bar{x}_1^{t}) - f_1(x_1^*) + f_2(\bar{x}_2^{t}) - f_2(x_2^*) -\lambda^\top(A_1\bar{x}_1^{t}+A_2\bar{x}_2^{t}-b) \\
\leq & \frac{1}{2\gamma(t+1)}\|\lambda-\lambda^0\|^2 + \frac{\gamma}{2(t+1)}\|A_1x_1^*+A_2x_2^0-b\|^2.
\ea\]
Based on the above bound, the error analysis for both the objective and the residual follow the same line of arguments as the proof of Theorem \ref{thm-ergodic-3}.
}

\section*{Acknowledgements}

We would like to thank the editor and the anonymous referees for carefully reading this paper and for insightful comments.

\bibliographystyle{plain}
\bibliography{admm3}

\end{document}